\documentclass[12pt,reqno]{amsart}
\usepackage{amssymb}
\usepackage{eurosym}
\usepackage{epsfig}
\usepackage{amsmath}
\usepackage{amsfonts}
\usepackage{pgf,tikz}
\usepackage{enumerate}
\usepackage{cite}

\usepackage{graphicx}
\usepackage{caption}
\usepackage{subcaption}

\textheight=8.5in \textwidth=6.4in 

\oddsidemargin=0.15in
\evensidemargin=0.10in

\setlength{\arraycolsep}{0.6mm}

\linespread{1.4}
\newtheorem{theorem}{\sc Theorem}[section]
\newtheorem{proposition}{\sc Proposition}[section]

\newtheorem{definition}{\sc Definition}[section]
\newtheorem{remark}{\sc Remark}[section]

\newtheorem{example}{\sc Example}[section]

\def\qed{\hbox to 0pt{}\hfill$\rlap{$\sqcap$}\sqcup$\medbreak}

\title[An existence result in annular regions times conical shells]{An existence result in annular regions times conical shells and its application to nonlinear Poisson systems}

\begin{document}
	\author[G. Infante]{Gennaro Infante}
	\address{Gennaro Infante, Dipartimento di Matematica e Informatica, Universit\`{a} della
		Calabria, 87036 Arcavacata di Rende, Cosenza, Italy}%
	\email{gennaro.infante@unical.it}%
	
	\author[G. Mascali]{Giovanni Mascali}
	\address{Giovanni Mascali, Dipartimento di Matematica e Informatica, Universit\`{a} della
		Calabria, 87036 Arcavacata di Rende, Cosenza, Italy}%
	\email{giovanni.mascali@unical.it}%
	
	\author[J. Rodr\'iguez--L\'opez]{Jorge Rodr\'iguez--L\'opez}
	\address{Jorge Rodr\'iguez--L\'opez, CITMAga \& Departamento de Estat\'{\i}stica, An\'alise Matem\'atica e Optimizaci\'on, Universidade de Santiago de Compostela,  15782, Facultade de Matem\'aticas, Campus Vida, Santiago, Spain}
	\email{jorgerodriguez.lopez@usc.es}%

\date{}

\begin{abstract}
We provide a new existence result for abstract nonlinear operator systems in normed spaces, by means of topological methods. The solution is located within the product of annular regions and conical shells. The theoretical result possesses a wide range of applicability, which, for concreteness, we illustrate in the context of systems of nonlinear Poisson equations subject to homogeneous Dirichlet boundary conditions. For the latter problem we obtain existence and localization of solutions having all components nontrivial. This is also illustrated with an explicit example in which we also furnish a numerically approximated solution, consistent with the theoretical results. We conclude with an application of our results to a reaction--diffusion Lotka--Volterra system with source terms for competing species.
\end{abstract}

\subjclass[2020]{47H10, 47H11, 45G15, 35J57}%
\keywords{Fixed point index, fixed point theorem, nonlinear system, quasilinear elliptic systems}%

\maketitle
\section{Introduction}
It is well known that the solvability of elliptic systems plays a key role when modelling real world phenomena~\cite{Pao}. Under the point of view of applications it is of interest to obtain the explicit solution (if possible) or at least  
as much qualitative information about the solution as possible, which can be useful also for devising suitable numerical schemes.
Various methods can be used to provide existence and localization of solutions. Insofar as topological methods are concerned, a classical approach is to rewrite the differential problem as an operator system and the localization of the solution of the latter system yields qualitative informations on the solution of the differential problem. To fix ideas, let us consider the following
 system of nonlinear Poisson equations subject to homogeneous Dirichlet boundary conditions
\begin{equation}\label{Poisson}
	\left\{\begin{array}{ll} -\Delta u=f(x,u,v), & \ \text{ in } \Omega, \\ -\Delta v=g(x,u,v), & \ \text{ in } \Omega, \\ u=v=0, & \ \text{ on } \partial\,\Omega,  \end{array} \right.
\end{equation}
where $\Omega\subset\mathbb{R}^n$ denotes the unit open ball and $f$ and $g$ are continuous functions. 
This kind of systems has been widely investigated by means of different methodologies, for example variational or topological, see for example the papers~\cite{alves-defig,  chzh07, chzh13, Cid-Infante, cui3, defigetal, gimmrp, lan1, lan2, lan3, ma2, zhch11, zou}, the  reviews~\cite{amann, defig, bruf}, and references therein. 

When using topological methods to solve the system~\eqref{Poisson}, if both the nonlinearities occurring in~\eqref{Poisson} are sign-changing, it is natural to seek solutions located in the product of two balls in suitable Banach spaces, while when the nonlinearities $f$ and $g$ are nonnegative, a natural choice is to look for solutions within suitable cones of positive functions, see, for instance, \cite{alves-defig, amann, chzh13, lan1, ma2}. 
An interesting case occurs when one of the nonlinearities is nonnegative and the other is sign changing. Our work aims to provide new results that fit precisely within this framework.
For this purpose, we study the following general abstract problem regarding  
the solvability of the operator system
\begin{equation}\label{system}
	\left\{\begin{array}{l} u_1=T_1(u_1,u_2), \\ u_2=T_2(u_1,u_2). \end{array} \right.
\end{equation}

The main abstract results are based on new fixed point index computations, which provide sufficient conditions for the existence of \textit{coexistence} fixed points $(u_1,u_2)$ for the operator $T=(T_1,T_2)$. The term \textit{coexistence}, already employed by Lan in \cite{lan3}, means that both components of the fixed point, $u_1$ and $u_2$, are non-trivial. If only one of the components is non-trivial, we say that the solution is  \textit{semi-trivial}, see for example~\cite{Dancer, LanLin}. 
In particular, to localize the solutions of the system \eqref{system}, we use compression-expansion homotopy type conditions in each component of the operator $T$, as in the \textit{vector version} of Krasnosel'ski\u{\i} fixed point theorem due to Precup \cite{PrecupFPT,PrecupSDC}. Compared with the original theorem by Precup, our main result (see Theorem \ref{th_starKras} below) applies for more general domains of the operator $T$ and, moreover, we obtain that its fixed point index is either $1$ or $-1$, which in particular ensures the existence of at least one fixed point. 

Similar computations of the fixed point index have been already deduced in \cite{LFP_JRL,InMaRo,JRL}, also in relation with Krasnosel'ski\u{\i}-Precup fixed point theorem. Unlike 
\cite{LFP_JRL,InMaRo,JRL}, here we work in the context of \textit{wedges}, instead of restricting the definition of the operators to the cartesian product of \textit{cones}, or of cones by closed convex subsets. We underline that this theoretical generalization has direct consequences in applications, providing new ways to localize the solutions to \eqref{system}. As far as we are aware, this is the first time that the solution to \eqref{system} is located in the cartesian product of an annular region times a conical shell. On the other hand, the manner how we compute the fixed point index here, which is based on its multiplicativity property, differs from that in \cite{LFP_JRL,JRL} and seems to be more intuitive. In addition, our results complement previous ones in the literature concerning systems of nonlinear equations, such as those in \cite{BGK23, ima}.

Going back to the applicability of the theoretical results, we consider the existence of solutions to a Dirichlet system of the form~\eqref{Poisson}.
We highlight that the nonlinearity $g$ may be sign-changing. Even so, we provide sufficient conditions for the existence of a solution~$(u,v)$ which is not semi-trivial, positive in $u$ and located within the product of a conical shell and an annulus. This localization provides interesting qualitative informations and quantitative estimates on the components of the solution.
We also provide a result useful to construct a  numerical approximation of the solution of~\eqref{Poisson}. We illustrate in an explicit example 
the constants that occur in our theory and we also exhibit numerical solutions that are consistent with our theoretical approach. Finally, as an application of the developed  theory, we consider a reaction--diffusion Lotka--Volterra system with source terms, which is widely used to study spatial ecological dynamics \cite{QiZh, Mur}, but can also be adapted to describe chemical reactions, or tumour growth \cite{ChDa,GaGa}.
We show that our results can also be effectively used to adjust the parameters in order to obtain solutions within a desired range.

\section{Fixed point index computations}
 
For the sake of completeness, we recall first some properties of the fixed point index for compact maps. Further details can be found in \cite{amann} or \cite[Chapter 12]{GraDug}.

We will say that a closed convex subset $K$ of a normed linear space $X$ is a \textit{wedge} if $\lambda\,u\in K$ for every $u\in K$ and for all $\lambda\geq 0$. A wedge $K$ is said to be a \textit{cone} if $K\cap (-K)=\{0\}$.

\begin{proposition}\label{prop_index}
	Let $C$ be a wedge of a normed space, $U\subset C$ be a bounded relatively open set and $S:\overline{U}\rightarrow C$ be a compact map such that $S$ has no fixed points on the boundary of $U$ (denoted by $\partial\, U$). Then the fixed point index  of $S$ on the set $U$ with respect to $C$, $i_{C}(S,U)$, has the following properties:
	\begin{enumerate}
		\item (Additivity) Let $U$ be the disjoint union of two open sets $U_1$ and $U_2$. If $0\not\in(I-S)(\overline{U}\setminus(U_1\cup U_2))$, then \[i_{C}(S,U)=i_{C}(S,U_1)+i_{C}(S,U_2).\]
		\item (Existence) If $i_{C}(S,U)\neq 0$, then there exists $u\in U$ such that $u=Su$.
		\item (Homotopy invariance) If $H:\overline{U}\times[0,1]\rightarrow C$ is a compact homotopy and $0\not\in(I-H)(\partial\,U\times[0,1])$, then
		\[i_{C}(H(\cdot,0),U)=i_{C}(H(\cdot,1),U).\]
		\item (Normalization) If $S$ is a constant map with $S(u)=u_0$ for every $u\in\overline{U}$, then
		\[i_{C}(S,U)=\left\{\begin{array}{ll} 1, & \text{ if } u_{0}\in U, \\ 0, & \text{ if } u_{0}\not\in\overline{U}. \end{array} \right. \]
		\item (Multiplicativity) For $j=1,2$, let $C_j$ be a wedge, $U_j\subset C_j$ be a open bounded set and $S_j:U_j\rightarrow C_j$ be a compact map fixed point free on the boundary of $U_j$. Then
		\[i_{C_1\times C_2}(S_1\times S_2,U_1\times U_2)=i_{C_1}(S_1,U_1)\cdot i_{C_2}(S_2,U_2). \]
	\end{enumerate}
\end{proposition}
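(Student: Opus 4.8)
The plan is to treat the five statements as the classical axioms of the fixed point index for compact maps on a wedge and to derive them from the construction of the index through the Leray--Schauder degree. Since a wedge $C$ is a closed convex subset of the normed space $X$, it is an absolute retract, so by Dugundji's extension theorem there is a retraction $r\colon X\to C$. Given a bounded relatively open $U\subset C$, one selects a bounded open $G\subset X$ with $G\cap C=U$ and $\overline{G}\cap C=\overline{U}$; for a compact $S\colon\overline{U}\to C$ fixed point free on $\partial\,U$, the composition $S\circ r$ is compact, maps into $C$, and is fixed point free on $\partial\,G$ (any fixed point $x\in\partial\,G$ would lie in $C$, hence satisfy $r(x)=x$ and $x=S(x)$ with $x\in\partial\,U$, a contradiction). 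One then sets $i_C(S,U):=\deg(I-S\circ r,G,0)$ and checks independence of the choices of $r$ and $G$. With the index so defined, properties (1)--(4) are inherited verbatim from the corresponding properties of the degree.

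Concretely, I would argue as follows. Additivity is the excision/additivity of the degree over disjoint open sets $G_1,G_2$ with $G_j\cap C=U_j$, the hypothesis $0\notin(I-S)(\overline{U}\setminus(U_1\cup U_2))$ guaranteeing no fixed points of $S\circ r$ in the excised region. Existence is immediate: $i_C(S,U)\neq0$ forces $\deg(I-S\circ r,G,0)\neq0$, hence a zero of $I-S\circ r$ in $G$, which via $r|_C=\mathrm{id}$ is a fixed point of $S$ in $U$. Homotopy invariance follows since a compact homotopy $H$ with $0\notin(I-H)(\partial\,U\times[0,1])$ lifts to the admissible homotopy $(x,t)\mapsto H(r(x),t)$ for the degree. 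Normalization reduces to $\deg(I-u_0,G,0)$, which equals $1$ when $u_0\in U$ and $0$ when $u_0\notin\overline{U}$.

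The property requiring the most care, and the one I expect to be the main obstacle, is multiplicativity (5). Here $C_1\times C_2$ is again a wedge in $X_1\times X_2$ and $r_1\times r_2$ is a retraction onto it, so $i_{C_1\times C_2}(S_1\times S_2,U_1\times U_2)$ is computed from $(S_1\times S_2)\circ(r_1\times r_2)=(S_1\circ r_1)\times(S_2\circ r_2)$. One first checks admissibility on the product boundary: since $\partial(U_1\times U_2)\subset(\partial\,U_1\times\overline{U_2})\cup(\overline{U_1}\times\partial\,U_2)$ and each $S_j$ is fixed point free on $\partial\,U_j$, the product map has no fixed point there. Then, writing $I-(S_1\times S_2)=(I_1-S_1)\times(I_2-S_2)$ and passing to finite-dimensional approximations of $S_1$ and $S_2$ chosen compatibly so that their Cartesian product approximates $S_1\times S_2$, the Brouwer-degree product formula $\deg(f\times g)=\deg f\cdot\deg g$ yields the factorization of the Leray--Schauder degree, and hence of the index. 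The genuine technical point is exactly this compatibility of approximations together with the classical product theorem for the Brouwer degree; all remaining assertions are direct transcriptions of the degree axioms, so for a paper of this scope it is legitimate to refer to \cite{amann} or \cite[Chapter 12]{GraDug} for the full construction and verification.
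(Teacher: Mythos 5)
The paper does not actually prove this proposition: it is recalled as standard background on the fixed point index, with the reader referred to \cite{amann} and \cite[Chapter 12]{GraDug}, and your proposal is precisely the construction carried out in those references --- the index of a compact map on a wedge (a retract of the ambient space, by Dugundji's extension theorem) is defined through the Leray--Schauder degree of $I-S\circ r$, properties (1)--(4) are read off from the corresponding degree axioms, and multiplicativity comes from the product theorem for the degree via compatible finite-dimensional approximations. So at the level of strategy your route is exactly the intended one.

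There is, however, one technical slip that makes your definition ill-posed as literally written. You set $i_C(S,U):=\deg(I-S\circ r,G,0)$ for a bounded open $G\subset X$ with $G\cap C=U$ and $\overline{G}\cap C=\overline{U}$. But $S$ is defined only on $\overline{U}$, so $S\circ r$ is defined only on $r^{-1}(\overline{U})$, and nothing in your choice of $G$ forces $r(\overline{G})\subset\overline{U}$. Concretely, let $X=\mathbb{R}^2$, $C=[0,\infty)^2$, $r$ the metric projection onto $C$, $U=\{(x,y)\in C:x+y<1\}$ and $G=\{(x,y)\in\mathbb{R}^2:x+y<1\}\cap B_{10}$; then $G\cap C=U$ and $\overline{G}\cap C=\overline{U}$, yet $(-5,3)\in G$ while $r(-5,3)=(0,3)\notin\overline{U}$, so $S\circ r$ is undefined at points of $G$ and the degree you write down has no meaning. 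The standard repair is to take $G:=r^{-1}(U)\cap B$, where $B$ is any bounded open ball containing $\overline{U}$: then $\overline{G}\subset r^{-1}(\overline{U})$, so $S\circ r$ is defined and compact on $\overline{G}$; any fixed point of $S\circ r$ lies in $C$, hence is a fixed point of $S$ in $\overline{U}$, which (by admissibility and $G\cap C=U$) cannot sit on $\partial\,G$. With this choice --- and with excision giving independence of $B$, and the commutativity property of the degree giving independence of $r$ --- all five properties follow exactly as you describe; in particular the product structure needed for (5) is preserved because $(r_1\times r_2)^{-1}(U_1\times U_2)\cap(B_1\times B_2)=\bigl(r_1^{-1}(U_1)\cap B_1\bigr)\times\bigl(r_2^{-1}(U_2)\cap B_2\bigr)$.
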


\begin{proposition}\label{prop_ind01}
	Assume that $S$ satisfies the hypotheses of Proposition \ref{prop_index}. Let $U$ be a bounded relatively open subset of $C$ such that $0\in U$.
	\begin{enumerate}[$(a)$]
		\item If $\lambda u\neq Su$ for all $u\in \partial\, U$ and all $\lambda\geq 1$, then $i_{C}(S,U)=1$.
		\item If there exists $w\in C$ with $\left\|w\right\|\neq 0$ such that $u\neq Su+\lambda w$ for every $\lambda\geq 0$ and all $u\in \partial\, U$, then $i_{C}(S,U)=0$.
	\end{enumerate}	
\end{proposition}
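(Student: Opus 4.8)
The plan is to prove both statements by straightforward homotopy arguments, invoking the properties collected in Proposition~\ref{prop_index}; the only delicate points are keeping every intermediate map inside the wedge $C$ and verifying that no fixed point escapes to $\partial\,U$ along the homotopies.

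For part $(a)$, I would connect $S$ to the zero map through the linear homotopy
\[
H(u,t)=t\,S(u),\qquad (u,t)\in\overline{U}\times[0,1].
\]
Since $C$ is a wedge and $t\geq 0$, each value $H(u,t)$ lies in $C$, and $H$ is compact because $S$ is. To apply homotopy invariance I must check that $u\neq H(u,t)$ on $\partial\,U$. If $u=t\,S(u)$ with $u\in\partial\,U$, then $t=0$ would force $u=0$, which is impossible because $0\in U$ is a relative interior point and hence $0\notin\partial\,U$; while $t\in(0,1]$ yields $S(u)=\lambda u$ with $\lambda=1/t\geq 1$, contradicting the hypothesis of $(a)$. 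Consequently $i_{C}(S,U)=i_{C}(H(\cdot,1),U)=i_{C}(H(\cdot,0),U)$, and since $H(\cdot,0)$ is the constant map equal to $0$ with $0\in U$, normalization gives $i_{C}(S,U)=1$.

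For part $(b)$, the idea is to push $S$ out of $\overline{U}$ along the fixed direction $w$. Because $S$ is compact, $S(\overline{U})$ is bounded, and $U$ is bounded, so I can select $\mu>0$ so large that $\left\|S(u)+\mu w\right\|$ exceeds the radius of a ball containing $\overline{U}$ for every $u\in\overline{U}$; then $S+\mu w$ has no fixed point in $\overline{U}$ at all. I would then use the homotopy
\[
H(u,t)=S(u)+t\,\mu\,w,\qquad (u,t)\in\overline{U}\times[0,1],
\]
which again takes values in $C$, since a wedge is closed under sums and nonnegative scalar multiples. A fixed point $u=S(u)+t\mu w$ on $\partial\,U$ would be of the form $u=S(u)+\lambda w$ with $\lambda=t\mu\geq 0$, contradicting the hypothesis of $(b)$, so homotopy invariance yields $i_{C}(S,U)=i_{C}(S+\mu w,U)$. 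Finally, since $S+\mu w$ has no fixed point in $U$, the contrapositive of the existence property forces $i_{C}(S+\mu w,U)=0$, whence $i_{C}(S,U)=0$.

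The main obstacle I anticipate is not conceptual but a matter of bookkeeping: one must be scrupulous that both homotopies remain $C$-valued (which is exactly where the wedge structure is used, rather than mere convexity) and that the hypotheses are strong enough to exclude boundary fixed points for \emph{every} parameter value, including the degenerate endpoint $t=0$ in $(a)$ and the role of $0\in U$ in ruling out $u=0\in\partial\,U$. Choosing $\mu$ explicitly via the boundedness of $S(\overline{U})$ is the one quantitative estimate required, and it is routine.
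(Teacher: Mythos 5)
Your proposal is correct. Note that the paper itself gives no proof of this proposition: it is recalled as a known result, with the details deferred to the cited references (Amann's survey and Granas--Dugundji). Your argument is precisely the classical one found there, and it is complete: in $(a)$, the radial homotopy $H(u,t)=t\,S(u)$ stays in $C$ by the wedge property, is admissible because $0\in U$ rules out $t=0$ and the hypothesis rules out $t\in(0,1]$, and normalization at the constant map $0$ gives index $1$; in $(b)$, the wedge is closed under addition and nonnegative scaling, so $H(u,t)=S(u)+t\mu w$ is $C$-valued, the hypothesis makes it admissible for every $t\mu\geq 0$, and the quantitative choice $\mu>(R+M)/\left\|w\right\|$ (with $\overline{U}\subset B_R$ and $\left\|S(u)\right\|\leq M$) ensures the endpoint map is fixed point free on all of $\overline{U}$, so the existence property forces index $0$. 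All the delicate points you flagged (keeping values in $C$, excluding $0$ from $\partial\,U$, compactness of the homotopies) are handled correctly.
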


\subsection{Star-shaped convex sets}

In the remaining part of this Section, let $(X,\left\|\cdot\right\|_X)$ and $(Y,\left\|\cdot\right\|_Y)$ be normed linear spaces and $K_1\subset X$, $K_2\subset Y$ two wedges. For simplicity, both norms $\left\|\cdot\right\|_X$ and $\left\|\cdot\right\|_Y$ will be denoted by $\left\|\cdot\right\|$.

Let us introduce the concept of \textit{star-convex set}. We refer the reader to the papers \cite{LR,LPR} for further properties of star-convex sets and a motivation for working with them in the context of Krasnosel'ski\u{\i} type compression--expansion fixed point theorems. 

\begin{definition}
	We say that a set $E\subset X$ is a star convex set if 
	\[\lambda\,x\in E \quad \text{for all } \lambda\in[0,1] \text{ and all } x\in E. \]
\end{definition}

Note that every convex set containing the zero is a star convex set. The reverse is not true.

\begin{example}
	In the normed space of continuous real functions defined in the compact interval $[0,1]$, $X=\mathcal{C}([0,1])$, the set 
	\[E=\left\{u\in X: u\geq 0, \ \min_{t\in[a,b]}u(t)<r \right\}, \]	
	with $[a,b]\subset [0,1]$ and $r>0$, is a star convex set. 
	
	However, $E$ is not convex: take $[a,b]=[0,1]$, $r=1/4$ and the functions $u_1(t)=t$, $u_2(t)=1-t$, $t\in[0,1]$, to check that $u_1,u_2\in E$ whereas $(u_1+u_2)/2\notin E$.
\end{example}

For each $i=1,2$, let $U_i$ and $V_i$ be bounded and relatively open subsets of $K_i$ such that
\begin{enumerate}
	\item $0\in V_i\subset \overline{V}_i\subset U_i$;
	\item $\overline{U}_i\setminus V_i$ is a retract of $\overline{U}_i$;
	\item $U_i$ and $V_i$ are star-convex sets.
\end{enumerate}

Now, we compute the fixed point index of a compact map defined in the Cartesian product of the sets $\overline{U}_i\setminus V_i$, $i=1,2$, under component-wise compression-expansion type assumptions.

	\begin{theorem}\label{th_starKras}	
	Assume that $T=(T_1,T_2):\left(\overline{U}_1\setminus V_1 \right)\times \left(\overline{U}_2\setminus V_2\right) \rightarrow K_1\times K_2$ is a compact map and, for each $i\in\{1,2\}$, there exists $h_i\in K_i\setminus\{0\}$ such that either of the following conditions are fulfilled in $\left(\overline{U}_1\setminus V_1 \right)\times \left(\overline{U}_2\setminus V_2\right)$:
	\begin{enumerate}
		\item[$(a)$] $T_i(u)+\mu\,h_i\neq u_i$ if $u_i\in\partial\,V_i$ and $\mu\geq 0$, and $T_i(u)\neq \lambda\, u_i$ if $u_i\in\partial\,U_i$ and $\lambda\geq 1$; or
		\item[$(b)$] $T_i(u)\neq \lambda\, u_i$ if $u_i\in\partial\,V_i$ and $\lambda\geq 1$, and $T_i(u)+\mu\,h_i\neq u_i$ if $u_i\in\partial\,U_i$ and $\mu\geq 0$. 
	\end{enumerate}

	Then the fixed point index of $T$ in $K_1\times K_2$ over $\left(U_1\setminus \overline{V}_1\right)\times \left(U_2\setminus \overline{V}_2\right)$ is well-defined and satisfies that
	\[i_{K_1\times K_2}\left(T,\left(U_1\setminus \overline{V}_1\right)\times \left(U_2\setminus \overline{V}_2\right)\right)=(-1)^k, \]
	where $k\in\{0,1,2\}$ is the number of times that condition $(b)$ is satisfied, $i=1,2$. 
	
	In particular, $T$ has at least one fixed point in $\left(U_1\setminus \overline{V}_1\right)\times \left(U_2\setminus \overline{V}_2\right)$.
\end{theorem}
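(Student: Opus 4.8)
The plan is to reduce the coupled operator $T=(T_1,T_2)$ to a \emph{decoupled} product map through an admissible homotopy, and then to apply the multiplicativity property of the index (Proposition~\ref{prop_index}(5)), evaluating each single-variable factor via Proposition~\ref{prop_ind01}. Write $W=\left(U_1\setminus\overline{V}_1\right)\times\left(U_2\setminus\overline{V}_2\right)$. By hypothesis~(2) there are retractions $\rho_i:\overline{U}_i\to\overline{U}_i\setminus V_i$, and since $U_i$ is star-convex its closure is star-convex as well, so that $(1-t)u_i\in\overline{U}_i$ for every $u_i\in\overline{U}_i$ and $t\in[0,1]$. Setting $q_i:=\rho_i(0)\in\overline{U}_i\setminus V_i$, I would introduce the homotopy $H:\overline{W}\times[0,1]\to K_1\times K_2$,
\[
H(u_1,u_2,t)=\Big(T_1\big(u_1,\rho_2((1-t)u_2)\big),\,T_2\big(\rho_1((1-t)u_1),u_2\big)\Big),
\]
which is compact because $T$ is compact and the $\rho_i$ are continuous. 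On $\overline{W}$ one has $\rho_i u_i=u_i$, whence $H(\cdot,0)=T$, while $H(\cdot,1)=\widehat{T}_1\times\widehat{T}_2$ with $\widehat{T}_1(u_1)=T_1(u_1,q_2)$ and $\widehat{T}_2(u_2)=T_2(q_1,u_2)$ a genuine product map.

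The heart of the argument is to check that $H$ is fixed-point free on $\partial W\times[0,1]$. A boundary point of $W$ has at least one component lying in $\partial U_i\cup\partial V_i$; say $u_1\in\partial U_1$ (the three remaining cases are analogous). If $H(u,t)=u$ then, reading the first coordinate and using $\rho_1u_1=u_1$, one finds $T_1\big(u_1,\rho_2((1-t)u_2)\big)=u_1$, where the pair $\big(u_1,\rho_2((1-t)u_2)\big)$ lies in the domain of $T$. This contradicts condition $(a)$ (taking $\lambda=1$) or condition $(b)$ (taking $\mu=0$) for the first component; note that these conditions are assumed \emph{uniformly} over the whole domain, so the concrete value of the frozen second argument is irrelevant. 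The same reasoning on $\partial V_1$, $\partial U_2$, $\partial V_2$ settles the remaining cases, and in particular (at $t=0$) shows that $T$ itself has no fixed point on $\partial W$, so all the indices below are well defined. Homotopy invariance (Proposition~\ref{prop_index}(3)) will then yield $i_{K_1\times K_2}(T,W)=i_{K_1\times K_2}\big(\widehat{T}_1\times\widehat{T}_2,W\big)$.

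Multiplicativity gives
\[
i_{K_1\times K_2}\big(\widehat{T}_1\times\widehat{T}_2,W\big)=i_{K_1}\big(\widehat{T}_1,U_1\setminus\overline{V}_1\big)\cdot i_{K_2}\big(\widehat{T}_2,U_2\setminus\overline{V}_2\big),
\]
so it remains to evaluate each single-variable annular index. For this I would extend $\widehat{T}_i$ to $\overline{U}_i$ through the retraction, $\overline{T}_i:=\widehat{T}_i\circ\rho_i$, which agrees with $\widehat{T}_i$ on $\overline{U}_i\setminus V_i$; by additivity (Proposition~\ref{prop_index}(1)),
\[
i_{K_i}\big(\widehat{T}_i,U_i\setminus\overline{V}_i\big)=i_{K_i}\big(\overline{T}_i,U_i\big)-i_{K_i}\big(\overline{T}_i,V_i\big).
\]
Since on $\partial U_i$ and $\partial V_i$ the map $\overline{T}_i$ coincides with $T_i$ evaluated with the other variable frozen at $q_{3-i}$, and $0\in V_i\subset U_i$, the two terms are computed by Proposition~\ref{prop_ind01}: under condition $(a)$ the inner boundary gives index $0$ and the outer boundary index $1$, for a factor $1-0=1$; under condition $(b)$ the roles reverse, giving $0-1=-1$. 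Hence each factor equals $+1$ exactly when $(a)$ holds and $-1$ when $(b)$ holds, so the product equals $(+1)^{k}(-1)^{2-k}=(-1)^{k}$, the claimed value. As $(-1)^k\neq0$, the existence property (Proposition~\ref{prop_index}(2)) furnishes a fixed point of $T$ in $W$.

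The step I expect to be most delicate is the construction and admissibility of the decoupling homotopy: one must simultaneously keep the retracted and scaled arguments inside the domain of $T$ (which is exactly where hypotheses~(2) and~(3) on the star-convex sets $U_i,V_i$ are used, the annular regions not being convex) and verify the boundary conditions in every component. Once the uniformity of $(a)$/$(b)$ over the full domain is exploited, this reduces to a clean four-case check; the subsequent multiplicativity and one-dimensional index computations are then routine and essentially contained in the single-equation theory of star-convex compression--expansion developed in \cite{LR,LPR}.
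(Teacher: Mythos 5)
Your proof is correct, and it relies on the same core devices as the paper's: the componentwise retractions $\rho_i$, a decoupling homotopy made admissible by star-convexity, the multiplicativity and additivity properties, and the index computations of Proposition~\ref{prop_ind01}. The difference is the order of the decomposition. The paper first extends $T$ to the full rectangle as $N=T\circ\rho$, homotopes $N$ to the decoupled map $\tilde N(u_1,u_2)=(N_1(u_1,0),N_2(0,u_2))$ separately on each of the four rectangles $\mathcal{O}_1\times\mathcal{O}_2$ with $\mathcal{O}_i\in\{U_i,V_i\}$, evaluates those four indices by multiplicativity, and only then assembles the index over $\left(U_1\setminus\overline{V}_1\right)\times\left(U_2\setminus\overline{V}_2\right)$ by a two-dimensional inclusion--exclusion, which forces its concluding four-case analysis. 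You instead homotope directly on the annular product $W$ (your $H$ coincides, up to the reparametrization $t\mapsto 1-t$, with the paper's homotopy restricted to $\overline{W}$, since $\rho_i$ is the identity there and $N_i(\cdot,0)$ freezes the other variable precisely at $q_{3-i}=\rho_{3-i}(0)$, so your $\overline{T}_i$ is literally the paper's $\tilde N_i$), apply multiplicativity once over $W$, and push the inclusion--exclusion inside each factor via $i_{K_i}\bigl(\widehat{T}_i,U_i\setminus\overline{V}_i\bigr)=i_{K_i}\bigl(\overline{T}_i,U_i\bigr)-i_{K_i}\bigl(\overline{T}_i,V_i\bigr)$; each factor is then $+1$ under $(a)$ and $-1$ under $(b)$, and the product $(-1)^k$ drops out with no case analysis. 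What the paper's ordering buys is that every homotopy and additivity step takes place on products of star-convex sets, so no extra point-set verifications are needed; what your ordering buys is economy (a single homotopy and no case distinctions), at the price of the two topological facts you use and should record explicitly: $\overline{W}\subset\left(\overline{U}_1\setminus V_1\right)\times\left(\overline{U}_2\setminus V_2\right)$ (the closure of a set disjoint from the open set $V_i$ stays disjoint from $V_i$) and $\partial\left(U_i\setminus\overline{V}_i\right)\subset\partial U_i\cup\partial V_i$. Both facts are true, so your argument is complete.
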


\begin{proof}
	First of all, consider the retraction $\rho:\overline{U}_{1}\times \overline{U}_{2}\rightarrow \left(\overline{U}_1\setminus V_1 \right)\times \left(\overline{U}_2\setminus V_2\right)$ defined as $\rho(u_1,u_2):=(\rho_1(u_1),\rho_2(u_2))$, where $\rho_1$ is a retraction of $\overline{U}_{1}$ onto $\overline{U}_1\setminus V_1$ and $\rho_2$, a retraction of $\overline{U}_{2}$ onto $\overline{U}_2\setminus V_2$. 
	
	Next, define the following continuous extension of $T$ to the set $\overline{U}_{1}\times \overline{U}_{2}$, 
	\[N=(N_1,N_2):\overline{U}_{1}\times \overline{U}_{2}\rightarrow K_1\times K_2, \qquad N:=T\circ\rho.\] 
	The operator $N$ is compact and, moreover, the fact that $N=T\circ\rho$ together with assumptions $(a)$ and $(b)$ imply that for each $i\in\{1,2\}$ one of the following conditions is fulfilled in $\overline{U}_{1}\times \overline{U}_{2}$:
	\begin{enumerate}
		\item[$(a^*)$] $N_i(u)+\mu\,h_i\neq u_i$ if $u_i\in\partial\,V_i$ and $\mu\geq 0$, and $N_i(u)\neq \lambda\, u_i$ if $u_i\in\partial\,U_i$ and $\lambda\geq 1$; or
		\item[$(b^*)$] $N_i(u)\neq \lambda\, u_i$ if $u_i\in\partial\,V_i$ and $\lambda\geq 1$, and $N_i(u)+\mu\,h_i\neq u_i$ if $u_i\in\partial\,U_i$ and $\mu\geq 0$.
	\end{enumerate}
	
	Let us denote $C:=K_1\times K_2$. Now, for each $i\in\{1,2\}$, take $\mathcal{O}_i\in \{U_i,V_i \}$ and consider the homotopy $H:\overline{\mathcal{O}}_{1}\times \overline{\mathcal{O}}_{2}\times[0,1]\rightarrow K_1\times K_2$ given by
	\[H((u_1,u_2),t)=\left(N_1(u_1,t\,u_2), N_2(t\,u_1,u_2) \right). \]
	Clearly, $H$ is well-defined since $U_1$, $V_1$, $U_2$ and $V_2$ are star convex sets.
	Moreover, it follows from assumptions $(a^*)$ and $(b^*)$ (with $\mu=0$ and $\lambda=1$, respectively) that the homotopy is \textit{admissible} (i.e., $u\neq H(u,t)$ for all $u\in \partial\left(\overline{\mathcal{O}}_{1}\times \overline{\mathcal{O}}_{2} \right)$ and all $t\in[0,1]$) and thus the homotopy invariance of the fixed point index ensures that
	\[i_{C}(N,\mathcal{O}_{1}\times \mathcal{O}_{2})=i_{C}(H(\cdot,1),\mathcal{O}_{1}\times \mathcal{O}_{2})=i_{C}(H(\cdot,0),\mathcal{O}_{1}\times \mathcal{O}_{2}). \]
	Hence we have
	\[i_{C}(N,\mathcal{O}_{1}\times \mathcal{O}_{2})=i_{C}(\tilde{N},\mathcal{O}_{1}\times \mathcal{O}_{2}), \]
	where $\tilde{N}(u_1,u_2)=(\tilde{N}_1(u_1),\tilde{N}_2(u_2)):=(N_1(u_1,0),N_2(0,u_2))$. Therefore, the multiplicativity property of the fixed point index (see \cite[Chapter 12]{GraDug}) guarantees that 
	\begin{equation}\label{eq_multipli}
		i_{C}(N,\mathcal{O}_{1}\times \mathcal{O}_{2})=i_{K_1}(\tilde{N}_1,\mathcal{O}_{1})\cdot i_{K_2}(\tilde{N}_2,\mathcal{O}_{2}).
	\end{equation}

	Now, by the additivity property of the fixed point index we obtain that
	\[i_{C}(N,U_{1}\times U_{2})=i_{C}\left(N,\left(U_1\setminus \overline{V}_1\right)\times \left(U_2\setminus \overline{V}_2\right)\right)+i_{C}(N,V_{1}\times U_{2})+i_{C}\left(N,\left(U_1\setminus \overline{V}_1\right)\times V_2\right) \]
	and
	\[i_{C}(N,U_{1}\times V_{2})=i_{C}\left(N,\left(U_1\setminus \overline{V}_1\right)\times V_2\right)+i_{C}(N,V_{1}\times V_{2}). \]
	Therefore, we deduce that
	\begin{align}\notag
		i_{C}\left(N,\left(U_1\setminus \overline{V}_1\right)\times \left(U_2\setminus \overline{V}_2\right)\right)&= i_{C}(N,U_{1}\times U_{2})-i_{C}(N,U_{1}\times V_{2})-i_{C}(N,V_{1}\times U_{2}) \\ &\quad+i_{C}(N,V_{1}\times V_{2}).\label{eq_addit}
	\end{align}

	Let us consider four cases:
	
	\begin{enumerate}
	\item[Case 1:] \textit{$T_1$ and $T_2$ satisfy condition $(a)$}. 
	Then $N_1$ and $N_2$ satisfy condition $(a^*)$, so Proposition \ref{prop_ind01} ensures that 
	\[i_{K_i}(\tilde{N}_i,U_{i})=1 \quad \text{and} \quad i_{K_i}(\tilde{N}_i,V_{i})=0 \quad (i=1,2). \]
	Hence, by \eqref{eq_multipli}, we obtain the following computations of the fixed point index
	\begin{equation}\label{index0}
				i_{C}(N,U_{1}\times V_{2})=i_{C}(N,V_{1}\times U_{2})=i_{C}(N,V_{1}\times V_{2})=0
	\end{equation}
		and 
	\begin{equation}\label{index1}
				i_{C}(N,U_{1}\times U_{2})=1.
	\end{equation}
	By \eqref{eq_addit}, \eqref{index0} and \eqref{index1},
	\[i_{C}\left(N,\left(U_1\setminus \overline{V}_1\right)\times \left(U_2\setminus \overline{V}_2\right)\right)=1. \]
	Finally, since $N=T$ on the set $\left(\overline{U}_1\setminus V_1 \right)\times \left(\overline{U}_2\setminus V_2\right)$, we deduce 
	\[i_{C}\left(T,\left(U_1\setminus \overline{V}_1\right)\times \left(U_2\setminus \overline{V}_2\right)\right)=i_{C}\left(N,\left(U_1\setminus \overline{V}_1\right)\times \left(U_2\setminus \overline{V}_2\right)\right)=1.\]
	
	\item[Case 2:] \textit{$T_1$ satisfies condition $(a)$ and $T_2$, hypothesis $(b)$}. Then we have
	\[i_{K_1}(\tilde{N}_1,U_{1})=1=i_{K_2}(\tilde{N}_2,V_{2}) \quad \text{and} \quad i_{K_1}(\tilde{N}_1,V_{1})=0=i_{K_2}(\tilde{N}_2,U_{2}). \]
	By \eqref{eq_multipli}, we deduce that
	\[i_{C}(N,U_{1}\times U_{2})=i_{C}(N,V_{1}\times U_{2})=i_{C}(N,V_{1}\times V_{2})=0, \quad i_{C}(N,U_{1}\times V_{2})=1 \]
	and thus it follows from \eqref{eq_addit} that \[i_{C}\left(T,\left(U_1\setminus \overline{V}_1\right)\times \left(U_2\setminus \overline{V}_2\right)\right)=i_{C}\left(N,\left(U_1\setminus \overline{V}_1\right)\times \left(U_2\setminus \overline{V}_2\right)\right)=-1.\]
	
	\item[Case 3:] \textit{$T_1$ satisfies condition $(b)$ and $T_2$, hypothesis $(a)$}. It follows in an analogous way to Case 2 that $i_{C}\left(T,\left(U_1\setminus \overline{V}_1\right)\times \left(U_2\setminus \overline{V}_2\right)\right)=-1$.
	
	\item[Case 4:] \textit{Condition $(b)$ holds for both $T_1$ and $T_2$}. 
	In this case, we have
	\[i_{K_i}(\tilde{N}_i,U_{i})=0 \quad \text{and} \quad i_{K_i}(\tilde{N}_i,V_{i})=1 \quad (i=1,2). \]
	Hence,
	\[i_{C}(N,U_{1}\times U_{2})=i_{C}(N,V_{1}\times U_{2})=i_{C}(N,U_{1}\times V_{2})=0, \quad i_{C}(N,V_{1}\times V_{2})=1 \]
	and so \eqref{eq_addit} implies that 
	\[i_{C}\left(T,\left(U_1\setminus \overline{V}_1\right)\times \left(U_2\setminus \overline{V}_2\right)\right)=1.\]
	\end{enumerate}
	
	In conclusion, $i_{C}\left(T,\left(U_1\setminus \overline{V}_1\right)\times \left(U_2\setminus \overline{V}_2\right)\right)=\pm 1$ and the existence property of the fixed point index ensures that $T$ has at least one fixed point located in $\left(U_1\setminus \overline{V}_1\right)\times \left(U_2\setminus \overline{V}_2\right)$.
\end{proof}

\subsection{Cartesian product of annular regions and conical shells}

Here, let $(X,\left\|\cdot\right\|)$ and $(Y,\left\|\cdot\right\|)$ be normed linear spaces such that $Y$ is infinite dimensional and $K\subset X$ a cone. 

The following notations will be useful: for given $r,R\in \mathbb{R}_+:=[0,\infty)$, $0<r<R$, we define
\[K_{r,R}:=\{u\in K:r<\left\|u\right\|<R \} \quad \text{ and } \quad \overline{K}_{r,R}:=\{u\in K:r\leq\left\|u\right\|\leq R \}. \]
Moreover, we denote as $A_{r,R}$ the following annular region in the normed space $Y$ 
\[A_{r,R}:=\{v\in Y:r<\left\|v\right\|<R \}, \]
that is, $A_{r,R}=B_{R}\setminus \overline{B}_{r}$ where $B_{\tau}$ stands for the open ball of radius $\tau$ centered at the origin and $\overline{B}_{\tau}$ represents its closure. Furthermore, $\overline{A}_{r,R}:=\overline{B}_{R}\setminus B_{r}$.

As a direct consequence of Theorem \ref{th_starKras}, we establish a result in the line of the \textit{vector version} of Krasnosel'ski\u{\i} fixed point theorem in cones due to Precup \cite{PrecupFPT,PrecupSDC}.

\begin{theorem}\label{th_ancon}
	Take $\alpha_i,\beta_i>0$, with $\alpha_i\neq \beta_i$, $r_i:=\min\{\alpha_i,\beta_i \}$ and $R_i:=\max\{\alpha_i,\beta_i \}$ for $i=1,2$, assume that $T=(T_1,T_2):\overline{K}_{r_1,R_1}\times \overline{A}_{r_2,R_2} \rightarrow K\times Y$ is a compact map and that there exist $h_1\in K\setminus\{0\}$ and $h_2\in Y\setminus\{0\}$ such that for each $i\in\{1,2\}$ the following conditions are satisfied in $\overline{K}_{r_1,R_1}\times \overline{A}_{r_2,R_2}$:
	\begin{enumerate}
		\item[$(a)$] $T_i(u)+\mu\,h_i\neq u_i$ if $\left\|u_i\right\|=\beta_i$ and $\mu\geq 0$;
		\item[$(b)$] $T_i(u)\neq \lambda\, u_i$ if $\left\|u_i\right\|=\alpha_i$ and $\lambda\geq 1$. 
	\end{enumerate}

	Then the fixed point index of $T$ in $K\times Y$ over $K_{r_1,R_1}\times A_{r_2,R_2}$, $i_{K\times Y}(T,K_{r_1,R_1}\times A_{r_2,R_2})$, is well-defined and
	\[i_{K\times Y}(T,K_{r_1,R_1}\times A_{r_2,R_2})=(-1)^k, \]
	where $k\in\{0,1,2\}$ is the number of times that the equality $\beta_i=R_i$ is satisfied, $i=1,2$. 
	
	In particular, $T$ has at least one fixed point $u=(u_1,u_2)\in K\times Y$ such that $r_i<\left\|u_i\right\|<R_i$ for $i=1,2$.
\end{theorem}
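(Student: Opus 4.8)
The plan is to deduce this statement directly from Theorem \ref{th_starKras} by choosing the wedges and the star-convex sets so that the conical shell and the annular region appear as the sets $U_i\setminus\overline V_i$. Concretely, I would take the wedges $K_1=K$ and $K_2=Y$ (the whole space $Y$ is a wedge) and set
\[
U_1=\{u\in K:\|u\|<R_1\},\quad V_1=\{u\in K:\|u\|<r_1\},\quad U_2=B_{R_2},\quad V_2=B_{r_2}.
\]
With these choices one has $\overline U_1\setminus V_1=\overline K_{r_1,R_1}$ and $\overline U_2\setminus V_2=\overline A_{r_2,R_2}$, so the domain of $T$ is exactly $(\overline U_1\setminus V_1)\times(\overline U_2\setminus V_2)$; moreover $U_1\setminus\overline V_1=K_{r_1,R_1}$ and $U_2\setminus\overline V_2=A_{r_2,R_2}$, which will identify the region over which the index is computed.

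The first task is to verify the three standing hypotheses on $U_i,V_i$. The nesting $0\in V_i\subset\overline V_i\subset U_i$ is immediate from $0<r_i<R_i$, and star-convexity of these origin-centred balls in a wedge is clear, since $\|\lambda u\|=\lambda\|u\|\le\|u\|$ for $\lambda\in[0,1]$. The delicate requirement is that $\overline U_i\setminus V_i$ be a retract of $\overline U_i$. For $i=1$ this is the fact that a conical shell $\overline K_{r_1,R_1}$ is a retract of the conical ball $\overline K_{0,R_1}$, for which I would invoke the properties of star-convex sets recorded in \cite{LR,LPR}. For $i=2$ it amounts to retracting $\overline B_{R_2}$ onto $\overline A_{r_2,R_2}$; this reduces to retracting $\overline B_{r_2}$ onto the sphere $\{\|v\|=r_2\}$ and extending by the identity on $\{r_2\le\|v\|\le R_2\}$ (the two pieces agree on $\|v\|=r_2$, so the glued map is continuous). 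Such a retraction of a closed ball onto its boundary sphere exists precisely because $Y$ is \emph{infinite dimensional}, and this is exactly where that hypothesis enters.

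The second task is bookkeeping: matching the boundary conditions. Since $\{\alpha_i,\beta_i\}=\{r_i,R_i\}$ with $r_i=\min$ and $R_i=\max$, the sphere $\|u_i\|=\alpha_i$ is $\partial V_i$ when $\alpha_i=r_i$ (i.e. $\alpha_i<\beta_i$) and is $\partial U_i$ when $\alpha_i=R_i$ (i.e. $\alpha_i>\beta_i$), and symmetrically for $\|u_i\|=\beta_i$. Reading the present hypotheses $(a)$ and $(b)$, which are both assumed to hold, on these two spheres, I would observe that when $\alpha_i>\beta_i$ component $i$ satisfies exactly condition $(a)$ of Theorem \ref{th_starKras} (the push by $h_i$ on $\partial V_i$, no positive scaling on $\partial U_i$), while when $\alpha_i<\beta_i$ it satisfies condition $(b)$ of that theorem. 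Hence the number of indices for which condition $(a)$ of Theorem \ref{th_starKras} holds coincides with the number of indices for which $\alpha_i=R_i$, that is, with the integer $k$ of the present statement.

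Finally, Theorem \ref{th_starKras} applies with $h_1\in K\setminus\{0\}$ and $h_2\in Y\setminus\{0\}$ and yields
\[
i_{K\times Y}\bigl(T,(U_1\setminus\overline V_1)\times(U_2\setminus\overline V_2)\bigr)=(-1)^k.
\]
Substituting $U_1\setminus\overline V_1=K_{r_1,R_1}$ and $U_2\setminus\overline V_2=A_{r_2,R_2}$ gives the asserted value of the index, and the existence property of the fixed point index produces a fixed point $u=(u_1,u_2)$ with $r_i<\|u_i\|<R_i$ for $i=1,2$. I expect the only genuine obstacle to be the verification of the retract hypothesis for the annular region: it is the obstruction removed by infinite-dimensionality and deserves to be isolated, since the corresponding retraction fails for finite-dimensional $Y$ by the no-retraction theorem. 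Everything else is a translation of the hypotheses into the framework already established in Theorem \ref{th_starKras}.
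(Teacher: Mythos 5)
Your proposal is correct and takes essentially the same route as the paper: the identical choice of wedges $K_1=K$, $K_2=Y$ and sets $U_1=B_{R_1}\cap K$, $V_1=B_{r_1}\cap K$, $U_2=B_{R_2}$, $V_2=B_{r_2}$, the same verification of the star-convexity and retract hypotheses (with infinite-dimensionality of $Y$ used exactly where you place it), and the same translation of conditions $(a)$--$(b)$ into those of Theorem \ref{th_starKras}. The only difference is bibliographic: for the retraction of $\overline{B}_{R_1}\cap K$ onto $\overline{K}_{r_1,R_1}$ the paper cites \cite[Example 3]{fel} or \cite{JRL} and writes it explicitly, $\rho_1(v)=r_1\,\bigl(v+(r_1-\|v\|)^2h_1\bigr)/\bigl\|v+(r_1-\|v\|)^2h_1\bigr\|$ for $\|v\|<r_1$ and $\rho_1(v)=v$ otherwise, rather than invoking \cite{LR,LPR}.
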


\begin{proof}
	In order to apply Theorem \ref{th_starKras}, take the wedges $K_1=K$, $K_2=Y$ and the relatively open sets $U_1=B_{R_1}\cap K$, $V_1=B_{r_1}\cap K$, $U_2=B_{R_2}$ and $V_2=B_{r_2}$. Note that, for each $i\in\{1,2\}$, we have that $U_i$ and $V_i$ are star convex sets since they are convex and contain the zero.
	
	On the other hand, the map $\rho_1: \overline{B}_{R_1}\cap K\rightarrow \overline{K}_{r_1,R_1}$ defined as
	\[\rho_1(v)=\left\{\begin{array}{ll} r_1\displaystyle\frac{v+(r_1-\left\|v\right\|)^2 h_1}{\left\|v+(r_1-\left\|v\right\|)^2 h_1 \right\|}, & \quad \text{if } \left\|v\right\|<r_1, \\[0.2cm] v,  & \quad \text{if } r_1\leq\left\|v\right\|\leq R_1,  \end{array} \right. \]
	is a retraction of $\overline{U}_1=\overline{B}_{R_1}\cap K$ onto $\overline{U}_1\setminus V_1=\overline{K}_{r_1,R_1}$, see \cite[Example 3]{fel} or \cite{JRL}. In addition, notice that $\overline{A}_{r_2,R_2}$ is a retract of $\overline{B}_{R_2}$ since in any infinite dimensional normed space $\partial\,B_{r_2}$ is a retract of $\overline{B}_{r_2}$. 
	
	Therefore, the conclusion follows in a straightforward way from Theorem \ref{th_starKras}.
\end{proof}

\begin{remark}
	Under the assumptions of Theorem \ref{th_ancon}, condition \eqref{eq_multipli} can be seen as
	\[i_{C}(N,\mathcal{O}_{1}\times \mathcal{O}_{2})=i_{K}(\tilde{N}_1,\mathcal{O}_{1})\cdot \deg(I-\tilde{N}_2,\mathcal{O}_{2}), \]
	since it follows from the definition of the fixed point index by means of the Leray-Schauder degree (see \cite{amann,GraDug}) that $i_{Y}(\tilde{N}_2,\mathcal{O}_{2})=\deg(I-\tilde{N}_2,\mathcal{O}_{2})$.
\end{remark}

\begin{remark}
	It is an open problem to decide whether the fixed point index computation $i_{K\times Y}(T,K_{r_1,R_1}\times A_{r_2,R_2})=(-1)^k$ remains valid provided that $T$ is fixed point free on the boundary of the set $K_{r_1,R_1}\times A_{r_2,R_2}$ and hypotheses $(a)$ and $(b)$ in Theorem \ref{th_ancon} are weakened as 
	\begin{enumerate}
		\item[$(\bar{a})$] $T_i(u)+\mu\,h_i\neq u_i$ if $\left\|u_i\right\|=\beta_i$ and $\mu> 0$;
		\item[$(\bar{b})$] $T_i(u)\neq \lambda\, u_i$ if $\left\|u_i\right\|=\alpha_i$ and $\lambda> 1$. 
	\end{enumerate}
	Notice that the previous approach based on the multiplicativity property of the fixed point index does not work since it is not possible to guarantee that the operators $\tilde{N}_1$ and $\tilde{N}_2$ are fixed point free on the boundary of the sets $\mathcal{O}_1$ and $\mathcal{O}_2$, respectively.
\end{remark}

\begin{remark}
	We stress that the main results of this Section can be extended to $n$-dimensional systems of the form
	\[u_i=T_i(u_1,u_2,\dots,u_n), \ i=1,2,\dots, n. \]
	Note that the multiplicativity property is valid for finite products and the homotopies employed in our reasoning in Theorem~\ref{th_starKras} can be adapted in a straightforward manner to operators defined in the product of $n$ wedges. In this situation the index will be again $(-1)^k$, where $k$ represents the number of components $T_i$ for which an expansive behavior holds (that is, $\beta_i=R_i$ in the context of Theorem~\ref{th_ancon}). 
\end{remark}

\section{Applications to elliptic systems}

Consider the following system of quasilinear elliptic equations subject to Dirichlet boundary conditions
\begin{equation}\label{sys_elliptic}
	\left\{\begin{array}{ll} -\Delta u=f(x,u,v), & \ \text{ in } \Omega, \\ -\Delta v=g(x,u,v), & \ \text{ in } \Omega, \\ u=v=0, & \ \text{ on } \partial\,\Omega,  \end{array} \right.
\end{equation}
where $\Omega\subset\mathbb{R}^n$ denotes the unit open ball in $\mathbb{R}^n$, $f:\overline{\Omega}\times \mathbb{R}_{+}\times\mathbb{R}\rightarrow\mathbb{R}_{+}$ and $g:\overline{\Omega}\times \mathbb{R}_{+}\times\mathbb{R}\rightarrow\mathbb{R}$ are continuous functions. 

In the sequel, in order to apply the theory developed in the previous section, we shall work with the normed space $X=Y=\mathcal{C}(\overline{\Omega})$ endowed with the usual norm $\left\|u\right\|_{\infty}=\max_{x\in\overline{\Omega}}\left|u(x) \right|$ (we will simply denote $\left\|\cdot\right\|=\left\|\cdot\right\|_{\infty}$) and the cone of nonnegative continuous functions, i.e., $K:=\{u\in \mathcal{C}(\overline{\Omega}):u\geq 0  \}$.

Now, we consider the following system of Hammerstein integral equations associated to~\eqref{sys_elliptic},
\begin{equation}\label{sys_Ham}
	\left\{\begin{array}{l} 
	u(x)=\displaystyle\int_{\Omega}k(x,y)\,f(y,u(y),v(y))\,dy, \\[0.3cm] 
	v(x)=\displaystyle\int_{\Omega}k(x,y)\,g(y,u(y),v(y))\,dy,  
	\end{array} \right.
\end{equation}
where $k$ is the Green's function corresponding to the problem
\[-\Delta u=h(x) \ \text{ in } \Omega, \quad u=0 \ \text{ on } \partial\,\Omega, \]
and $h$ is a given continuous function. To the system \eqref{sys_Ham} we associate the operator
$$T=(T_1,T_2):K\times Y\rightarrow K\times Y,$$ where
\begin{equation}\label{eq_T}
\begin{array}{l}
	T_1(u,v)(x)=\displaystyle\int_{\Omega}k(x,y)\,f(y,u(y),v(y))\,dy, \\ [0.3cm]
	T_2(u,v)(x)=\displaystyle\int_{\Omega}k(x,y)\,g(y,u(y),v(y))\,dy.
\end{array}
\end{equation}
Note that $T$ is well-defined ($f\geq 0$ implies that $T_1(K\times Y)\subset K$). Moreover, by the continuity of $f$ and $g$, it follows that the operator $T$ is completely continuous, see for example the classical book \cite[Section 7.2]{Kras} or the more recent paper~\cite{Yang-Lan}. 

By a (weak) solution of \eqref{sys_elliptic}, we mean a fixed point of the operator $T$. Hence, in what follows, we will apply Theorem~\ref{th_ancon} to the operator $T$ in order to obtain a solution $(u,v)$ with both components non-trivial. Note that since $g$ is a sign-changing nonlinearity, it is not expected the second component of the solution, $v$, to be a nonnegative function, but it will be localized in an annular region and so it cannot be the identically zero function.

\begin{theorem}\label{th_exis}
	Assume that there exist positive numbers $0<r_1<R_1$, $0<r_2<R_2$ and continuous functions $\overline{f},\underline{f},\overline{g},\underline{g}:\overline{\Omega}\rightarrow\mathbb{R}_+$ such that the following conditions hold:
	\begin{enumerate}[$a)$]
		\item $f(x,u,v)\leq \overline{f}(x)$ on $\overline{\Omega}\times [0,R_1]\times[-R_2,R_2]$ and 
		\[\sup_{x\in\overline{\Omega}}\int_{\Omega}k(x,y)\overline{f}(y)\,dy<R_1;  \]
		\item $\underline{f}(x)\leq f(x,u,v)$ on $\overline{\Omega}\times [0,r_1]\times[-R_2,R_2]$ and 
		\[\sup_{x\in\overline{\Omega}}\int_{\Omega}k(x,y)\underline{f}(y)\,dy>r_1;  \]
		\item $\left|g(x,u,v)\right|\leq \overline{g}(x)$ on $\overline{\Omega}\times [0,R_1]\times[-R_2,R_2]$ and 
		\[\sup_{x\in\overline{\Omega}}\int_{\Omega}k(x,y)\overline{g}(y)\,dy<R_2;  \]
		\item $g(x,u,v)\geq 0$ on $\overline{\Omega}\times [0,R_1]\times [-r_2,r_2]$, $g(x,u,v)\geq \underline{g}(x)$ on $\overline{\Omega}\times [0,R_1]\times[0,r_2]$
		and 
		\[\sup_{x\in\overline{\Omega}}\int_{\Omega}k(x,y)\underline{g}(y)\,dy>r_2.  \]
	\end{enumerate}	

Then the system \eqref{sys_elliptic} has at least one weak solution $(u,v)$ such that $u$ is nonnegative, $r_1<\left\|u\right\|<R_1$ and $r_2<\left\|v\right\|<R_2$.
\end{theorem}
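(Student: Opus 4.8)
The plan is to verify that the operator $T=(T_1,T_2)$ from \eqref{eq_T} meets all the hypotheses of Theorem \ref{th_ancon} with the choices $\alpha_1=R_1$, $\beta_1=r_1$, $\alpha_2=R_2$, $\beta_2=r_2$, and with $h_1\in K\setminus\{0\}$, $h_2\in Y\setminus\{0\}$ both taken to be the constant function $1$ (so that $h_1,h_2\geq 0$). The operator has already been shown to be completely continuous and to map $K\times Y$ into $K\times Y$, so its restriction to $\overline{K}_{r_1,R_1}\times\overline{A}_{r_2,R_2}$ is a compact map of the required form; it therefore only remains to check the boundary conditions $(a)$ and $(b)$ of Theorem \ref{th_ancon} for each of the two components. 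Once these are in place, the conclusion is immediate: since the equality $\alpha_i=R_i$ holds for both $i=1,2$, the index equals $(-1)^2=1\neq 0$, and the existence property produces a fixed point $(u,v)$ with $r_1<\|u\|<R_1$ and $r_2<\|v\|<R_2$, while $u\in K$ forces $u$ to be nonnegative. Throughout I use that the Green's function satisfies $k(x,y)\geq 0$.

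I would first dispose of the cone component. For the outer condition $(b)$, if $T_1(u,v)=\lambda u$ with $\|u\|=R_1$ and $\lambda\geq 1$, then, using $0\leq u\leq R_1$ and $|v|\leq R_2$ on the domain together with hypothesis $a)$, one gets $\lambda R_1=\|T_1(u,v)\|\leq\sup_{x}\int_{\Omega}k(x,y)\overline{f}(y)\,dy<R_1$, which is impossible. For the inner condition $(a)$, suppose $T_1(u,v)+\mu h_1=u$ with $\mu\geq 0$ and $\|u\|=r_1$; since $h_1\geq 0$, hypothesis $b)$ yields the pointwise bound $u(x)\geq T_1(u,v)(x)\geq\int_{\Omega}k(x,y)\underline{f}(y)\,dy$, so taking the supremum gives $\|u\|\geq\sup_{x}\int_{\Omega}k(x,y)\underline{f}(y)\,dy>r_1$, a contradiction.

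The second component is where the difficulty lies. Its outer condition $(b)$ is straightforward: at $\|v\|=R_2$, hypothesis $c)$ gives $\|T_2(u,v)\|\leq\sup_{x}\int_{\Omega}k(x,y)\overline{g}(y)\,dy<R_2$, so $T_2(u,v)=\lambda v$ with $\lambda\geq 1$ cannot occur. The main obstacle is the inner condition $(a)$ at $\|v\|=r_2$, because $v$ need not be nonnegative and $Y$ carries no cone structure. To overcome this I would use the two-tiered hypothesis $d)$ as a bootstrap. Assume $T_2(u,v)+\mu h_2=v$ with $\mu\geq 0$ and $\|v\|=r_2$. On this set $|v|\leq r_2$ and $0\leq u\leq R_1$, so the first part of $d)$ (namely $g\geq 0$ on $\overline{\Omega}\times[0,R_1]\times[-r_2,r_2]$) gives $T_2(u,v)\geq 0$; since also $h_2\geq 0$ and $\mu\geq 0$, it follows that $v=T_2(u,v)+\mu h_2\geq 0$. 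Thus $0\leq v\leq r_2$, which places the argument on the range where the second part of $d)$ applies, so $g\geq\underline{g}$ and hence $v(x)\geq T_2(u,v)(x)\geq\int_{\Omega}k(x,y)\underline{g}(y)\,dy$; taking the supremum and invoking $\sup_{x}\int_{\Omega}k(x,y)\underline{g}(y)\,dy>r_2$ contradicts $\|v\|=r_2$. This step — first using nonnegativity of $g$ on the symmetric band $[-r_2,r_2]$ to force $v\geq 0$, and only afterwards invoking the lower bound valid on $[0,r_2]$ — is exactly what the splitting of $d)$ into two ranges is designed to permit, and is the only place where the sign-changing character of $g$ has to be handled with care.
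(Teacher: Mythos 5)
Your proposal is correct and follows essentially the same route as the paper: it applies Theorem \ref{th_ancon} to the operator $T$ from \eqref{eq_T} with $h_1=h_2={\pmb 1}$, verifies the same four boundary conditions by the same contradiction arguments, and in particular handles the delicate inner condition for $T_2$ exactly as the paper does, first using $g\geq 0$ on $[-r_2,r_2]$ to force $v\geq 0$ and only then invoking the lower bound $\underline{g}$ on $[0,r_2]$. The only cosmetic difference is that you make the index value $(-1)^2=1$ explicit, whereas the paper simply cites the existence conclusion of Theorem \ref{th_ancon}.
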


\begin{proof}
	Let us apply Theorem~\ref{th_ancon} to the operator $T=(T_1,T_2):\overline{K}_{r_1,R_1}\times \overline{A}_{r_2,R_2}\rightarrow K\times Y$ defined as in \eqref{eq_T}.	
	
	To do so, let us check first that the following conditions concerning the operator $T_1$ are satisfied in $\overline{K}_{r_1,R_1}\times \overline{A}_{r_2,R_2}$:
	\begin{enumerate}
		\item[$1)$] $T_1(u,v)\neq \lambda\, u$ if $\left\|u\right\|=R_1$ and $\lambda\geq 1$;
		\item[$2)$] $T_1(u,v)+\mu\,{\pmb 1}\neq u$ if $\left\|u\right\|=r_1$ and $\mu\geq0$ (where ${\pmb 1}$ denotes the constant function equal to one).
	\end{enumerate}

	To prove $1)$, we assume by \textit{reductio ad absurdum} that there exist $(u,v)\in K\times Y$ with $\left\|u\right\|=R_1$, $r_2\leq\left\|v\right\|\leq R_2$ and $\lambda\geq 1$ such that for all $x\in\overline{\Omega}$ we have
	\begin{align*}
		\lambda\,u(x)&=\displaystyle\int_{\Omega}k(x,y)\,f(y,u(y),v(y))\,dy \\ &\leq \displaystyle\int_{\Omega}k(x,y)\,\overline{f}(y)\,dy,
	\end{align*}
	and thus, taking the supremum on $\overline{\Omega}$, it follows from condition $a)$ that $\lambda\,R_1=\lambda\,\left\|u\right\|<R_1$, a contradiction. 

	Now, to show that $2)$ holds, assume to the contrary that there exist $(u,v)\in K\times Y$  with $\left\|u\right\|=r_1$, $r_2\leq\left\|v\right\|\leq R_2$ and $\mu\geq 0$ such that $T_1(u,v)+\mu\,{\pmb 1}= u$, that is, for every $x\in \overline{\Omega}$ we have
	\[u(x)=\displaystyle\int_{\Omega}k(x,y)\,f(y,u(y),v(y))\,dy+\mu\,{\pmb 1}. \]
	Then, by hypothesis $b)$ we obtain that for $x\in \overline{\Omega}$,
	\[u(x)\geq \displaystyle\int_{\Omega}k(x,y)\,f(y,u(y),v(y))\,dy\geq \displaystyle\int_{\Omega}k(x,y)\,\underline{f}(y)\,dy. \]
	Hence, passing to the supremum on $\overline{\Omega}$ gives the absurd
	\[r_1=\sup_{x\in\overline{\Omega}}u(x)\geq \sup_{x\in\overline{\Omega}}\displaystyle\int_{\Omega}k(x,y)\,\underline{f}(y)\,dy>r_1. \]

	It remains to prove that the operator $T_2$ satisfies the corresponding conditions in the set $\overline{K}_{r_1,R_1}\times \overline{A}_{r_2,R_2}$, namely,
	\begin{enumerate}
		\item[$3)$] $T_2(u,v)\neq \lambda\, v$ if $\left\|v\right\|=R_2$ and $\lambda\geq 1$;
		\item[$4)$] $T_2(u,v)+\mu\,{\pmb 1}\neq v$ if $\left\|v\right\|=r_2$ and $\mu\geq0$.
	\end{enumerate}
To prove $3)$, we proceed in a similar way as in the proof of $1)$ above;
 note that, in this case, we have to take care of the absolute value of $v$, that is
	\begin{align*}
		|\lambda\,v(x)|&=\left| \displaystyle\int_{\Omega}k(x,y)\,g(y,u(y),v(y))\,dy \right| \\ &\leq \displaystyle\int_{\Omega}k(x,y)\,| g (y,u(y),v(y))|\,dy \leq  \displaystyle\int_{\Omega}k(x,y)\,\overline{g}(y)\,dy< R_2,
	\end{align*}
	which yields a contradiction.

Now let us focus on the point $4)$. Assume that there exist $(u,v)\in \overline{K}_{r_1,R_1}\times \overline{A}_{r_2,R_2}$ with $\left\|v\right\|=r_2$ and $\mu\geq 0$ such that $T_2(u,v)+\mu\,{\pmb 1}= v$. Then we have that $0\leq u(x)\leq R_1$ and $-r_2\leq\left|v(x)\right|\leq r_2$ for all $x\in\overline{\Omega}$ and thus $g(x,u(x),v(x))\geq 0$ for all $x\in\overline{\Omega}$. It follows that 
	\[v(x)=\displaystyle\int_{\Omega}k(x,y)\,g(y,u(y),v(y))\,dy+\mu\,{\pmb 1}\geq \displaystyle\int_{\Omega}k(x,y)\,g(y,u(y),v(y))\,dy, \]
	which implies $v(x)\geq 0$ on $\overline{\Omega}$. By condition $d)$, we deduce that
	\[v(x)\geq \displaystyle\int_{\Omega}k(x,y)\,\underline{g}(y)\,dy \]
	and again, taking the supremum, we get a contradiction.
	
	Therefore, Theorem~\ref{th_ancon} ensures that the operator $T$ has at least one fixed point in ${K}_{r_1,R_1}\times {A}_{r_2,R_2}$.
\end{proof}
We have the following result which is helpful to construct a numerical approximation for the solutions of the system~\eqref{sys_elliptic}. 
\begin{theorem}\label{theoseq}
Under the hypotheses $a)$ and $c)$ of Theorem~\ref{th_exis} it is possible to construct a weak solution of the system~\eqref{sys_elliptic}. 
\end{theorem}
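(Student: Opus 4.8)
The plan is to exploit the fact that conditions $a)$ and $c)$ are precisely the two \emph{upper} estimates needed to turn $T$ into a self-map of a closed, bounded and convex set, and then to invoke Schauder's fixed point theorem together with its finite-dimensional approximation scheme, which is what makes the construction effective from the numerical viewpoint. Concretely, I would introduce the set
\[D:=\left(\overline{B}_{R_1}\cap K\right)\times \overline{B}_{R_2}\subset \mathcal{C}(\overline{\Omega})\times \mathcal{C}(\overline{\Omega}),\]
which is nonempty (it contains the origin), closed, bounded and convex, being a product of convex sets. The core step is to verify that $T(D)\subseteq D$. Given $(u,v)\in D$, one has $0\le u(x)\le R_1$ and $|v(x)|\le R_2$ for every $x\in\overline{\Omega}$, so the argument $(x,u(x),v(x))$ remains in the region $\overline{\Omega}\times[0,R_1]\times[-R_2,R_2]$ where $a)$ and $c)$ are assumed. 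Since the Green's function $k$ is nonnegative and $f\ge 0$, the first component satisfies $T_1(u,v)\ge 0$ and, by $a)$,
\[0\le T_1(u,v)(x)\le \int_{\Omega}k(x,y)\overline{f}(y)\,dy,\qquad\text{so}\qquad \|T_1(u,v)\|\le \sup_{x\in\overline{\Omega}}\int_{\Omega}k(x,y)\overline{f}(y)\,dy<R_1,\]
whence $T_1(u,v)\in \overline{B}_{R_1}\cap K$. Analogously, $c)$ yields $|T_2(u,v)(x)|\le \int_{\Omega}k(x,y)\overline{g}(y)\,dy$ and therefore $\|T_2(u,v)\|<R_2$, i.e. $T_2(u,v)\in\overline{B}_{R_2}$. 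Hence $T$ maps $D$ into itself.

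Since $T$ is completely continuous, as already noted after \eqref{eq_T}, and $D$ is a closed bounded convex subset of a Banach space invariant under $T$, Schauder's fixed point theorem (see \cite{GraDug}) provides a fixed point $(u,v)\in D$, which by definition is a weak solution of \eqref{sys_elliptic} satisfying $u\ge 0$, $\|u\|\le R_1$ and $\|v\|\le R_2$. For the constructive aspect I would make Schauder's argument explicit: choosing a sequence of Schauder projections $P_n$ onto finite-dimensional subspaces of $\mathcal{C}(\overline{\Omega})\times\mathcal{C}(\overline{\Omega})$, the maps $P_n\circ T$ restrict to self-maps of finite-dimensional compact convex sets, to which Brouwer's theorem applies and yields approximate solutions $w_n$; the relative compactness of $T(D)$, guaranteed by complete continuity, then allows one to extract a subsequence converging to a genuine fixed point of $T$. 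This finite-dimensional reduction is exactly what underpins the numerical scheme employed later in the explicit example.

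The step I expect to require the most care is not the inclusion $T(D)\subseteq D$, which follows directly from $a)$ and $c)$, but rather the passage to the limit in the approximation: one must guarantee that the discretized fixed points $w_n$ accumulate at a point of $D$ and that this accumulation point is itself fixed by $T$. Both facts rest on the complete continuity of $T$, so the compactness of the operator is the essential hypothesis driving the whole construction. It is worth stressing that, since the lower bounds $b)$ and $d)$ are not assumed here, the solution produced in this way need not be a coexistence solution (it may even be trivial); this is the price paid in exchange for a fully constructive, numerically exploitable existence statement, in contrast with the degree-theoretic localization of Theorem~\ref{th_exis}.
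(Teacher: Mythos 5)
Your proposal is correct, and its first half --- using $a)$ and $c)$ to show that $T$ maps $D=\left(\overline{B}_{R_1}\cap K\right)\times\overline{B}_{R_2}$ into itself --- is exactly the opening step of the paper's proof. Where you diverge is in how the fixed point is then produced. The paper does not invoke Schauder's theorem: it runs the Picard iteration $(u_n,v_n)=T(u_{n-1},v_{n-1})$ from an arbitrary $(u_0,v_0)\in D$, i.e.\ it solves the sequence of \emph{linear} Poisson problems \eqref{sys_elliptic_seq}, observes that the whole sequence stays in the relatively compact set $T(D)$, extracts a uniformly convergent subsequence, and passes to the limit in the integral equations \eqref{sys_approx} by dominated convergence. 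This choice is deliberate: the iteration \eqref{sys_elliptic_seq} is literally the numerical procedure implemented in Example~\ref{esempio1}, so the construction in the theorem and the computation coincide; your Schauder-projection/Brouwer reduction is a valid constructive scheme, but it is not the one the paper's MATLAB experiment uses (each outer step there is a linear solve, not a Galerkin projection of the nonlinear map), so your closing claim on that point should be softened. In exchange, your route is the more robust one: Schauder's theorem applies verbatim, whereas the paper's argument as written needs extra care --- from $(u_{n_k},v_{n_k})\to(\overline{u},\overline{v})$ one can only pass to the limit in $u_{n_k}(x)=\int_{\Omega}k(x,y)\,f(y,u_{n_k-1}(y),v_{n_k-1}(y))\,dy$ if the \emph{shifted} subsequence $(u_{n_k-1},v_{n_k-1})$ is also controlled, and it need not converge to $(\overline{u},\overline{v})$: Picard iterates of a compact self-map of a convex set can cycle (think of a rotation of the closed disk), and subsequential limits of such iterates need not be fixed points. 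Your observation that the solution obtained this way may be semi-trivial or trivial, since $b)$ and $d)$ are dropped, is also accurate, and the paper itself concedes the substance of your approach in its final Remark: Schauder's theorem applied to $D$ gives existence for \eqref{eq_ex}, only with a less precise localization than Theorem~\ref{th_exis}.
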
	
\begin{proof}	
Take $(u,v)\in  (K\cap \overline{B}_{R_1})\times \overline{B}_{R_2}$ and observe that, due to the hypotheses $a)$ and $c)$,
 for every $x\in \overline{\Omega}$ we have
$$
\begin{array}{l} |T_1 u(x)|=\displaystyle\int_{\Omega}k(x,y)\,f(y,u(y),v(y))\,dy\leq \displaystyle\int_{\Omega}k(x,y)\,\overline{f}(y)\,dy< R_1, \\[0.3cm]  |T_2v(x)|=\Bigl |\displaystyle\int_{\Omega}k(x,y)\,g(y,u(y),v(y))\,dy\Bigr |
\leq \displaystyle\int_{\Omega}k(x,y)\,\overline{g}(y)\,dy<R_2.\end{array}
$$
Therefore $T$ maps $(K\cap \overline{B}_{R_1})\times \overline{B}_{R_2}$ into itself. 

Now, take a  couple of functions $(u_0,v_0)\in (K\cap \overline{B}_{R_1})\times \overline{B}_{R_2}$, and define the sequence  $\{(u_n,v_n)\}$ as the unique solutions (which exist by classical elliptic theory, see for example~\cite{Evans2010}) of the following systems
 \begin{equation}\label{sys_elliptic_seq}
	\left\{\begin{array}{ll} -\Delta u_n=f(x,u_{n-1},v_{n-1}), & \ \text{ in } \Omega, \\ -\Delta v_n=g(x,u_{n-1},v_{n-1}), & \ \text{ in } \Omega, \\ u_n=v_n=0, & \ \text{ on } \partial\,\Omega,  \end{array} \right.
\end{equation}
 for $n=1,2, \ldots$, which are given by
 $$
 (u_n,v_n)=T(u_{n-1},v_{n-1}).
 $$
 
Note that  the sequence $\{(u_n,v_n)\}$  is contained in $(K\cap \overline{B}_{R_1})\times \overline{B}_{R_2}$, because of the previous observation and, furthermore, given the compactness of the operator $T$, $\{(u_n,v_n)\}$  is contained in a compact subset of
$(K\cap \overline{B}_{R_1})\times \overline{B}_{R_2}$. Therefore there exists a subsequence of $\{(u_n,v_n)\}$ (which we denote in the same way, with abuse of notation)  that converges to a couple
$(\overline{u},\overline{v})\in (K\cap \overline{B}_{R_1})\times \overline{B}_{R_2}$.
For this subsequence we have
\begin{equation}\label{sys_approx}
	\left\{\begin{array}{l} u_n(x)=\displaystyle\int_{\Omega}k(x,y)\,f(y,u_{n-1}(y),v_{n-1}(y))\,dy, \\[0.3cm] v_n (x)=\displaystyle\int_{\Omega}k(x,y)\,g(y,u_{n-1}(y),v_{n-1}(y))\,dy.  \end{array} \right.
\end{equation}
By means of the Lebesgue dominated convergence theorem, passing to the limit for $n\to \infty$ in \eqref{sys_approx} we obtain
 $$
 (\overline{u},\overline{v})=T(\overline{u},\overline{v}),
 $$
that is $ (\overline{u},\overline{v})$ is a weak solution of the system~\eqref{sys_elliptic}.
\end{proof}
In the following example we illustrate the applicability of Theorem~\ref{th_exis} and, using the iterative process illustrated in Theorem~\ref{theoseq},
we construct a numerical solution with properties consistent with the theoretical predictions.

\begin{example}\label{esempio1}
	Take the open set $\Omega=\{(x_1,x_2)\in\mathbb{R}^2:x_1^2+x_2^2<1 \}$ and consider the system
	\begin{equation}\label{eq_ex}
		\left\{\begin{array}{ll} -\Delta u=\dfrac{1}{5}(1+x_1^2)e^u(2+\cos v), & \text{ in } \Omega, \\[0.3cm] -\Delta v=\dfrac{3}{4}(1+x_1^2)(1-v^2)(2+\sin u), & \text{ in } \Omega, \\[0.3cm] u=v=0, & \text{ on } \partial\,\Omega. \end{array}\right.
	\end{equation}

	Note that conditions $a)$ -- $d)$ in Theorem~\ref{th_exis} can be verified by choosing $r_1=1/21$, $R_1=1/2$, $r_2=1/6$, $R_2=3/2$ and the constant functions $\overline{f}\equiv 6\sqrt{e}/5$, $\underline{f}\equiv 1/5$, $\overline{g}\equiv 45/8$ and $\underline{g}\equiv 35/24$, as the lower and upper bounds of the nonlinearities
	\[f((x_1,x_2),u,v)=\dfrac{1}{5}(1+x_1^2)e^u(2+\cos v) \ \text{ and } \  g((x_1,x_2),u,v)=\dfrac{3}{4}(1+x_1^2)(1-v^2)(2+\sin u) \] 
	in the corresponding sets.
	To check these computations take into account that
	\[\sup_{x\in\overline{\Omega}}\int_{\Omega}k(x,y)\,{\pmb 1}\,dy=\sup_{x\in\overline{\Omega}}\dfrac{1}{4}(1-x_1^2-x_2^2)=\dfrac{1}{4}, \] 
	as it can be seen by direct calculation.
	
	We now numerically approach the above-written system by using the MATLAB solver for Poisson problems introduced in \cite{ARC}, suitably modified for treating a 	nonlinear system of equations by means of the iterative procedure~\eqref{sys_elliptic_seq}.
    We start with the identically zero initial guess and after fourteen iterations we obtain a numerical solution within a relative tolerance
    of $10^{-10}$ in the infinity norm, this is illustrated in Figure~\ref{fig:test}.
      We remark that the infinity norms of $u$ and $v$ are $0.191$ and $0.406$ respectively, these values are consistent with the estimates obtained with the theoretical results.

	\begin{figure}[h]
		\centering
		\begin{subfigure}{.5\textwidth}
			\centering
			\includegraphics[width=.7\linewidth]{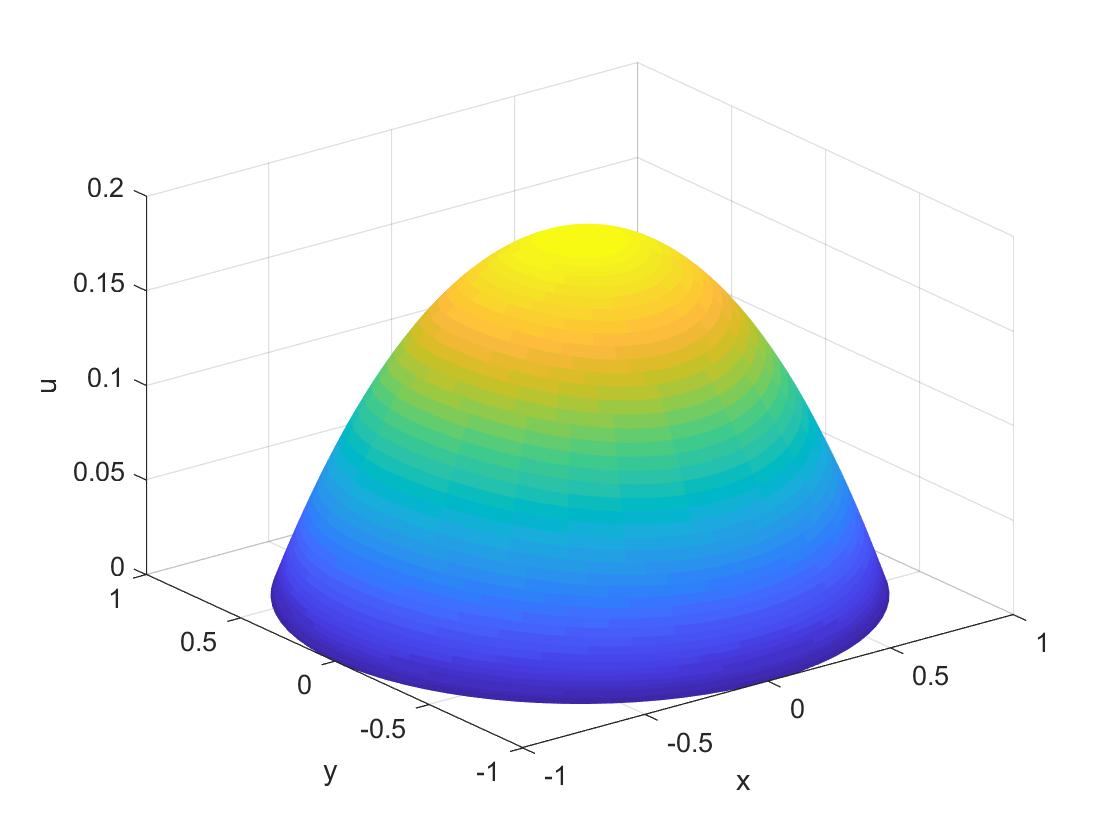}
			\caption{The component $u$}
		\end{subfigure}%
		\begin{subfigure}{.5\textwidth}
			\centering
			\includegraphics[width=.7\linewidth]{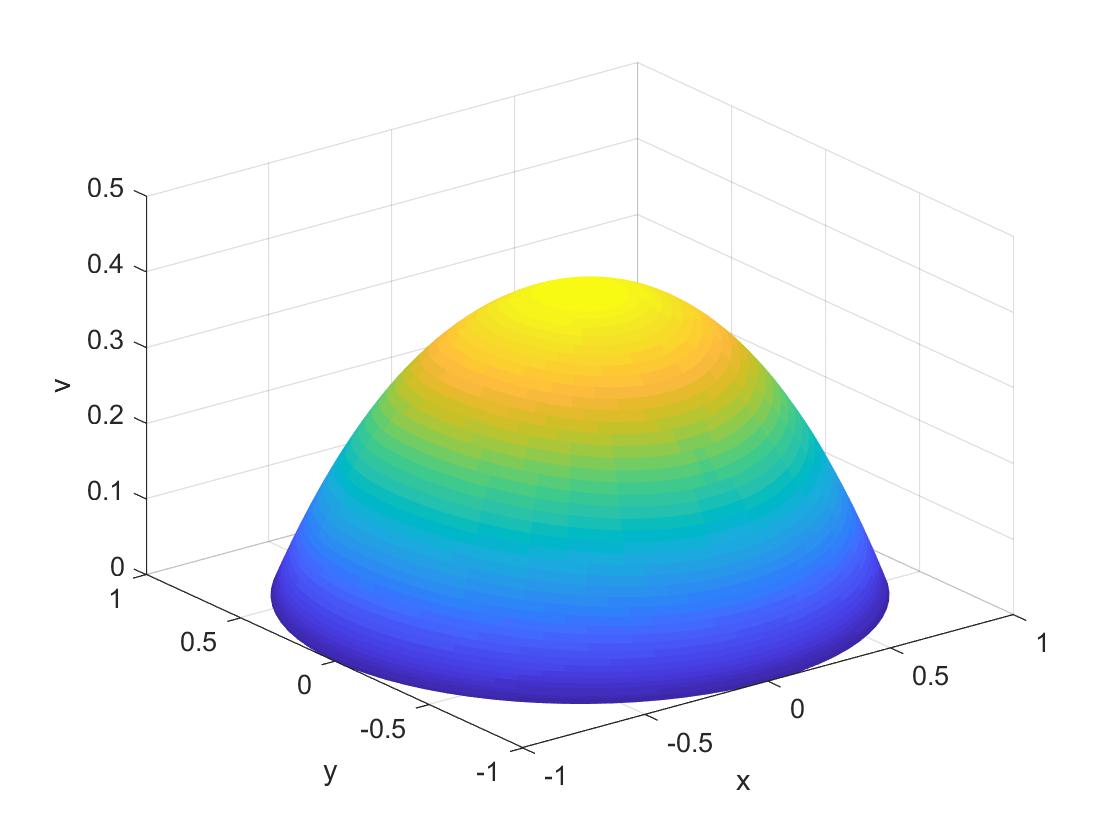}
			\caption{The component $v$}
		\end{subfigure}
		\caption{A numerical solution for Example~\ref{esempio1}}
		\label{fig:test}
	\end{figure}
	
\end{example}
\begin{remark}
Note that in Example~\ref{esempio1} one has  that the operator $T$ maps $(K\cap \overline{B}_{1/2})\times \overline{B}_{3/2}$ into itself, therefore a direct application of the Schauder Theorem would yield a solution of~\eqref{eq_ex}, but with a less precise localization.
\end{remark}

\section{A reaction--diffusion Lotka--Volterra system with source terms for competing species}
As an application of the theory developed so far, we now consider the system given by
\[
\begin{cases}
	\displaystyle \frac{\partial u}{\partial t} = D_1 \Delta u + r_1 u \left(1 - \frac{u}{K_1} \right) - \gamma_1 u v + S_1(x), \\
	\displaystyle \frac{\partial v}{\partial t} = D_2 \Delta v - \mu v - \gamma_2 u v + S_2(x),
\end{cases}
\]
that models the interaction of two competing species. Here, \( u(x,t) \) and \( v(x,t) \) denote the densities of the two species at location $x$ and time $t$, \( D_1, D_2 > 0 \) are their diffusion coefficients, and \( r_1 > 0 \) is the intrinsic growth rate of species \( u \). The parameter \( \mu > 0 \) represents the natural decay rate of species \( v \), while \( K_1 \) is the carrying capacity for species \( u \). The constants \( \gamma_1, \gamma_2 > 0 \) measure the strength of interspecific competition, and the functions \( S_1(x), S_2(x) \geq 0 \) represent spatially dependent source terms in presence of source terms. More precisely, the system describes the following:
\begin{itemize}
	\item The species \( u \) exhibits logistic growth and competes with species \( v \).
	\item The species \( v \) has a negative intrinsic growth rate and cannot persist without external input.
	\item The terms \( S_1(x) \) and \( S_2(x) \) account for spatially distributed environmental support or species introduction.
\end{itemize}

To simplify the analysis, we nondimensionalize the system by introducing the following scaled variables and parameters:
\begin{align*}
	& \tilde{x} = \frac{x}{L}, \quad \tilde{t} = r_1 t, \quad U = \frac{u}{K_1}, \quad V = \frac{v}{V_*}, \quad \text{with } V_* = \frac{r_1}{\gamma_1}, \\
	& \delta_1 = \frac{D_1}{r_1 L^2}, \quad \delta_2 = \frac{D_2}{r_1 L^2},  \quad \alpha = \frac{\mu}{r_1}, \quad \lambda = \frac{\gamma_2K_1 }{\mu}, \\
	& \sigma_1(\tilde{x}) = \frac{S_1(L\tilde{x})}{r_1 K_1}, \quad \sigma_2(\tilde{x}) = \frac{\gamma_1 S_2(L\tilde{x})}{r_1^2},
\end{align*}
where $L$ is a characteristic length.
With abuse of notation, by
dropping the tildes, the dimensionless system becomes:
\[
\begin{cases}
	\displaystyle \frac{\partial U}{\partial t} = \delta_1 \Delta U + U(1 - U) - U V + \sigma_1(x),\vspace{0.1cm} \\
	
	\displaystyle \frac{\partial V}{\partial t} = \delta_2 \Delta V - \alpha V (1 + \lambda U) + \sigma_2(x).
\end{cases}
\]

We focus on the stationary case and examine the existence of steady-state solutions to
\begin{equation}\label{species-model}
\begin{cases}
	- \Delta U = U(1 - U) - U V + \sigma_1(x), \\
	- \Delta V =-\alpha V (1 + \lambda U) + \sigma_2(x),
\end{cases}
\end{equation}
for which an $L^\infty$ a priori estimate follows from Theorem~\ref{th_exis}, where 
 we assume \( \delta_1 = \delta_2 = 1 \) for simplicity. The system~\eqref{species-model} is considered in a two-dimensional unit disk $\Omega$, subject to homogeneous Dirichlet boundary conditions.
Moreover, we take $R_1\le 1$. 
We construct the functions $\overline{f}(x)$, $\underline{f}(x)$, $\overline{g}(x)$, and $\underline{g}(x)$ as follows
\[  \underline{f}(x)=\- r_1 R_2 + \sigma_1(x) \le f(x,U,V) \leq \frac{1}{4} + R_1 R_2 + \sigma_1(x)=\overline{f}(x),\]
\[  \underline{g}(x)=\sigma_2(x) - \alpha r_2 (1+\lambda R_1)\le g(x,U,V)\le |g(x,U,V)| \leq \alpha R_2(1+\lambda R_1) + \sigma_2(x)=\overline{g}(x).\]
By direct calculations, it follows that all the conditions of Theorem~\ref{th_exis} are satisfied if the following inequalities hold: 
\[ 0<\overline{\sigma}_1 < R_1 (4-R_2) - 1/4, \quad \underline{\sigma}_1 > r_1 (4+R_2) > 0, \]
\[ 0<\overline{\sigma}_2 < R_2 (4-\alpha (1+\lambda R_1))>0,\quad \underline{\sigma}_2 > r_2 (4+\alpha (1+\lambda R_1)), \]
\[ \overline{\sigma}_1 > \underline{\sigma}_1\Rightarrow\quad  R_1 (4-R_2) - 1/4 > r_1 (4+R_2) , \]
\[ \overline{\sigma}_2 > \underline{\sigma}_2\Rightarrow \quad  R_2 (4-\alpha (1+\lambda R_1))> r_2 (4+\alpha (1+\lambda R_1)),\]
\[ r_2 \left( 4 + \alpha (1+\lambda R_1) \right) < \underline{\sigma}_2 < \alpha R_2 (1+\lambda R_1), \]
where $\underline{\sigma}_1 := \inf_{x \in \overline{\Omega}} \sigma_1(x)$, $\underline{\sigma}_2 := \inf_{x \in \overline{\Omega}} \sigma_2(x)$,
$\overline{\sigma}_1 := \sup_{x \in \overline{\Omega}} \sigma_1(x)$ and $\overline{\sigma}_2 := \sup_{x \in \overline{\Omega}} \sigma_2(x)$.
In particular, the last right inequality assures that the function $g(x,U,V)$ can assume negative values. 

With the choice
	\[
	\begin{aligned}
		&\sigma_1(x) = 0.3 + 2.4(1 - x_1^2 - x_2^2), \; \sigma_2(x) = 0.09 + 0.5(1 - x_1^2 - x_2^2), \\
		&R_1=1,\; R_2=1,\; r_1=0.05,\; r_2=0.02,\; \alpha=0.1,\; \lambda=0.05, 
	\end{aligned}
\]
where $x = \begin{pmatrix}
	x_1 \\
	x_2
\end{pmatrix}$, all the previous inequalities are satisfied. In particular,
note that $\min {g(x,U,V)}=-0.015<0,$ which is attained for $U=V=1$ and $|x|=1$. The functions $\sigma_1$ and $\sigma_2$  are isotropic, reach their maximum at the centre and decrease with the distance from it. 
The corresponding numerical approximations of the solution are represented in Figure~\ref{fig_ex2}. 
\begin{figure}[h]
	\centering
	\begin{subfigure}{.5\textwidth}
		\centering
		\includegraphics[width=.7\linewidth]{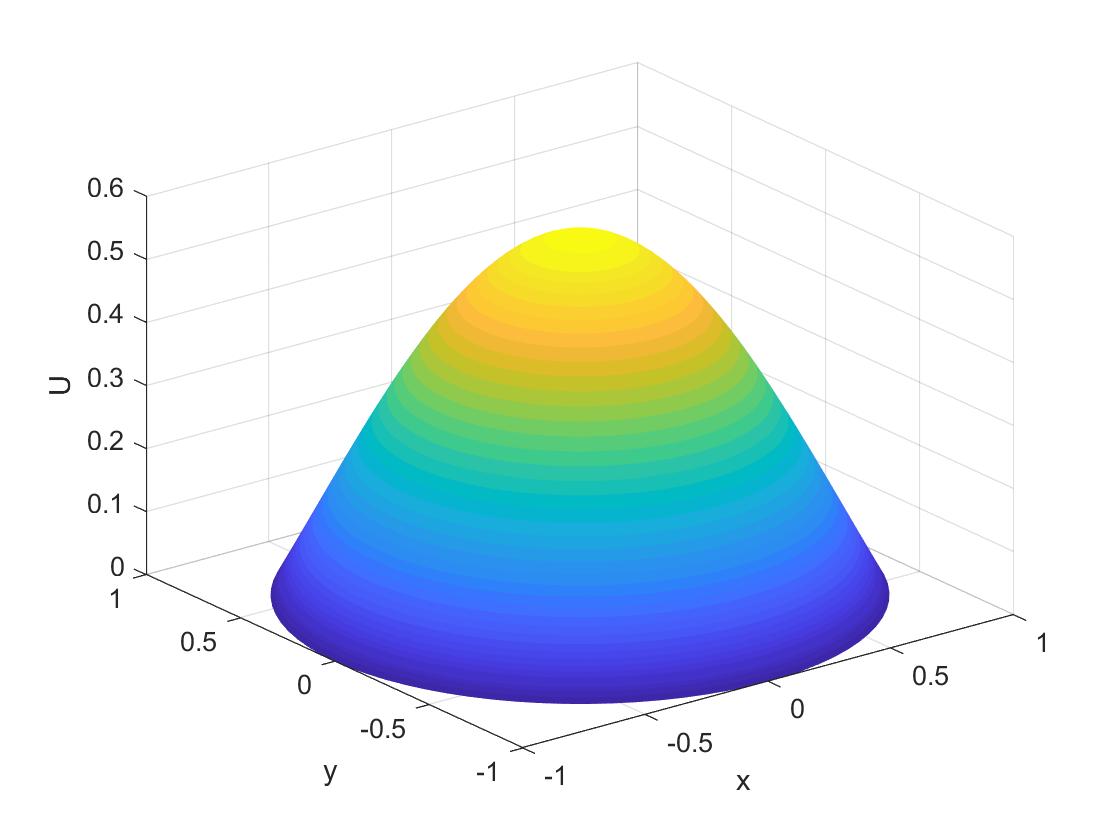}
		\caption{The component $U$}
	\end{subfigure}%
	\begin{subfigure}{.5\textwidth}
		\centering
		\includegraphics[width=.7\linewidth]{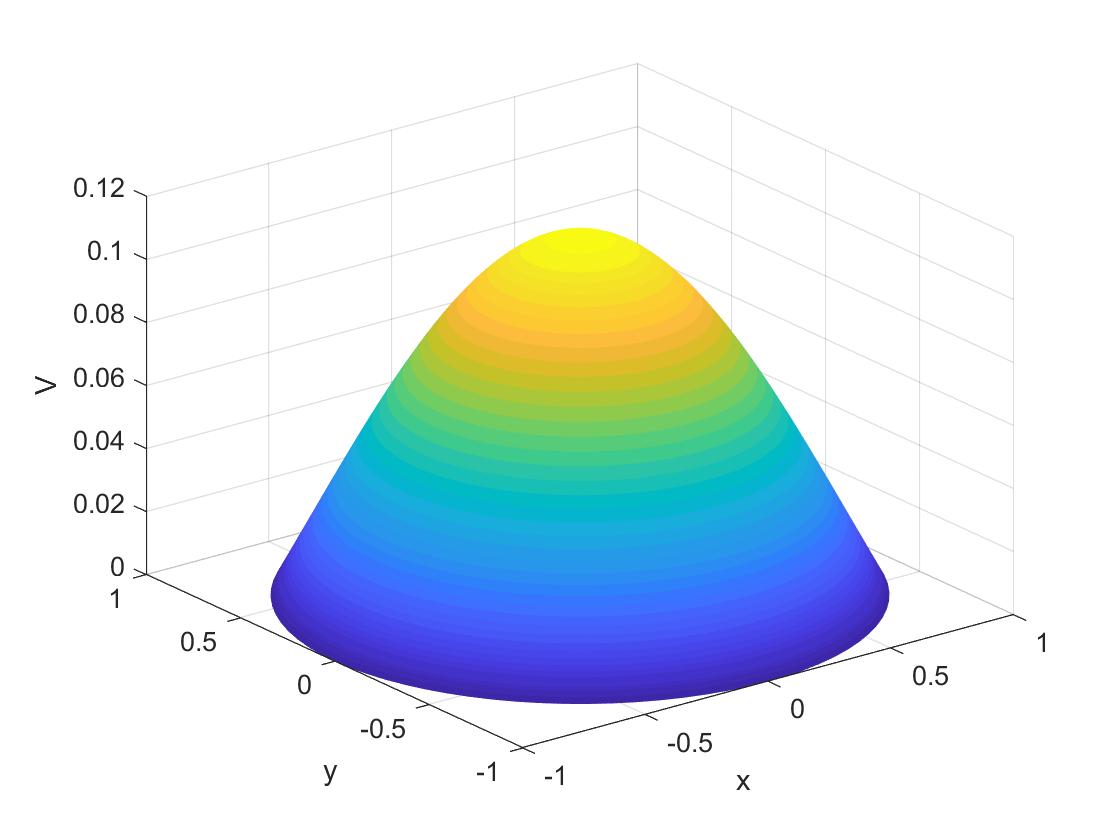}
		\caption{The component $V$}
	\end{subfigure}
	\caption{A numerical solution for~\eqref{species-model}}
	\label{fig_ex2}
\end{figure}
Finally, we emphasize that the infinity norms of $U$ and $V$ are 0.572 and 0.114, respectively, in perfect agreement with the theoretical predictions. This highlights that the results of Theorem~\ref{th_exis} can be effectively used to adjust the parameters to obtain solutions within a prescribed range.

\section*{Acknowledgements}
The authors would like to thank the Referee the careful reading of the manuscript and the constructive comments.
G.~Infante is a member of the Gruppo Nazionale per l'Analisi Matematica, la Probabilit\`a e le loro Applicazioni (GNAMPA) of the Istituto Nazionale di Alta Matematica (INdAM), of the UMI Group TAA  ``Approximation Theory and Applications'' and of  the Research ITalian network on Approximation (RITA). G.~Mascali is a member of the Gruppo Nazionale per la Fisica Matematica (GNFM) of INdAM. G.~Infante and G.~Mascali are partially supported by the project POS-CAL.HUB.RIA.  G. Infante was partly funded by the Research project of MUR - Prin 2022 “Nonlinear differential problems with applications to real phenomena” (Grant Number: 2022ZXZTN2). G. Mascali acknowledges the support from MUR, Project PRIN
“Transport phonema in low dimensional structures: models, simulations and theoretical aspects”
CUP E53D23005900006.
J. Rodr\'iguez--L\'opez has been partially supported by the VIS Program of the University of Calabria, by Ministerio de Ciencia y Tecnología (Spain), AEI and Feder, grant PID2020-113275GB-I00 and by Xunta de Galicia, grant ED431C 2023/12.

\section*{Conflicts of interest}
The authors declare no conflict of interest.

\section*{Contribution statement}
All authors contributed equally to this manuscript.


\begin{thebibliography}{99}

\bibitem{alves-defig}
C. O. Alves and D. G. de Figueiredo,
Nonvariational elliptic systems,
\textit{Discrete Contin. Dyn. Syst.},
\textbf{8} (2002), 289--302.

	\bibitem{amann} H. Amann, Fixed point equations and nonlinear eigenvalue problems in ordered Banach spaces, {\it SIAM Rev.}, {\bf 18} 4 (1976), 620--709.
	
	
	\bibitem{BGK23} M. Be{\l}dzi{\'n}ski, M. Galewski and I. Kossowski, On a version of hybrid existence result for a system of nonlinear equations, {\it Adv. Nonlinear Stud.}, {\bf 23} 4 (2023), 16 pp.


	\bibitem{chzh13} X. Cheng and Z. Zhang, Positive solutions for a class of multi-parameter elliptic systems, \textit{Nonlinear Anal. Real World Appl.},
\textbf{14} (2013), 1551--1562.

\bibitem{chzh07} 
X. Cheng and C. Zhong, Existence of three nontrivial solutions for an elliptic system, \textit{J. Math. Anal. Appl.},
\textbf{327} (2007), 1420--1430. 

\bibitem{ChDa}	R. Cherniha and V. Davydovych, Construction and application of exact solutions of the diffusive Lotka–Volterra system: A review and new results, \textit{Commun. Nonlinear Sci. Numer. Simul.},
\textbf{113} (2022), Article ID 106579, 25 p.
	
\bibitem{Cid-Infante}
J. A. Cid and G. Infante, A non-variational approach to the existence of nonzero positive solutions for elliptic systems, 
\textit{J. Fixed Point Theory Appl.},
\textbf{19} (2017), 3151--3162.
	
\bibitem{cui3}
R. Cui, P.  Li, J. Shi and Y. Wang,
Existence, uniqueness and stability of positive solutions for a class of semilinear elliptic systems,
\textit{Topol. Methods Nonlinear Anal.},
\textbf{42} (2013), 91--104.
	
\bibitem{Dancer} E. N. Dancer, \textit{Positive of Maps and Applications}. In: M. Matzeu, A. Vignoli (eds). Topological Nonlinear Analysis. Progress in Nonlinear Differential Equations and Their Applications, vol 15, Birkh\"{a}user, Boston (1995).
		
\bibitem{Evans2010} L. C. Evans, \textit{Partial Differential Equations}, American Mathematical Society, Providence, Rhode Island (2010).

	\bibitem{fel} G. Feltrin, A note on a fixed point theorem on topological cylinders, \textit{Ann. Mat. Pura Appl.}, \textbf{196} (2017), 1441--1458.

\bibitem{LFP_JRL} L. M. Fern\'andez-Pardo and J. Rodr\'iguez-L\'opez, Component-wise Krasnosel'skii type fixed point theorem in product spaces and applications, \textit{Nonlinear Anal. Real World Appl.}, \textbf{88} (2026), No. 104506,~1-12.

\bibitem{defigetal}
D. G. de Figueiredo, J. M. do {\'O} and B. Ruf,
Non-variational elliptic systems in dimension two: a priori bounds and existence of positive solutions, \textit{J. Fixed Point Theory Appl.},  \textbf{4} (2008), 77--96.

\bibitem{defig}
D. G. de Figueiredo, \textit{Semilinear elliptic systems: existence, multiplicity, symmetry of solutions}. In: Handbook of Differential Equations, Vol. 5: Stationary Partial Differential Equations, M. Chipot (ed.), Elsevier, (2008), 1--48.

\bibitem{GaGa}R. A. Gatenby and E. T. Gawlinski, A reaction-diffusion model of cancer invasion, \textit{Cancer Res.},
\textbf{56} (1996), 5745--5753.	

\bibitem{GraDug} A. Granas and J. Dugundji, {\it Fixed Point Theory}, Springer, New York (2003).

\bibitem{ima} G. Infante and M. Maciejewski, Multiple positive solutions of parabolic systems with nonlinear, nonlocal initial conditions, \textit{J. London Math. Soc.}, \textbf{94} (2016), 859--882.

\bibitem{gimmrp}
G. Infante, M. Maciejewski and R. Precup,
A topological approach to the existence and multiplicity of positive solutions of $(p,q)$-Laplacian systems, \textit{Dyn. Partial Differ. Equ.}, \textbf{12} (2015), 193--215. 

\bibitem{InMaRo} G. Infante, G. Mascali and J. Rodr\'iguez-L\'opez, A hybrid Krasnosel'ski\u{\i}-Schauder fixed point theorem for systems,
\textit{Nonlinear Anal. Real World Appl.}, \textbf{80} (2024), 1--9.

\bibitem{Kras}  M. A. Krasnosel'ski\u{\i}, \textit{Positive Solutions of Operator Equations}, Noordhoff (1964).
	
\bibitem{lan1} K. Q. Lan, Nonzero positive solutions of systems of elliptic boundary value problems, {\it Proc. Amer. Math. Soc.}, \textbf{139} (2011),  4343--4349.

\bibitem{lan2} K. Q. Lan, Existence of nonzero positive solutions of systems of second order elliptic boundary value problems \textit{J. Appl. Anal. Comput.}, \textbf{1} (2011), 21--31.
	
\bibitem{lan3} K. Q. Lan, Coexistence fixed point theorems in product Banach spaces and applications, \textit{Math. Meth. Appl. Sci.}, \textbf{44} (2021), 3960--3984.
	
	\bibitem{LanLin} K. Q. Lan and W. Lin, Steady-state solutions of one-dimensional competition models in an unstirred chemostat via the fixed point index theory, \textit{Proc. Royal Soc. Edinburgh A}, \textbf{151} (2021), 240--264.
	
	\bibitem{LR} C. Lois-Prados and R. Rodr\'iguez-L\'opez, A generalization of Krasnosel'ski\u{\i} compression fixed point theorem by using star convex sets, \textit{Proc. Royal Soc. Edinburgh A}, \textbf{150} (2020), 277--303.
	
	\bibitem{LPR} C. Lois-Prados, R. Precup and R. Rodr\'iguez-L\'opez, Krasnosel'ski\u{\i} type compression-expansion fixed point theorem for set contractions and star convex sets, \textit{J. Fixed Point Theory Appl.}, \textbf{22} (2020), 1--20.
	
\bibitem{ma2} R. Ma, R. Chen and Y. Lu, Positive solutions for a class of sublinear elliptic systems, \textit{Bound. Value Probl.},
\textbf{2014:28}, (2014), 15 pp.

\bibitem{Mur} J.~D.~Murray, \textit{Mathematical Biology I: An Introduction}, 3rd ed., Springer, (2002).

\bibitem{Pao} C.V. Pao, \textit{Nonlinear Parabolic and Elliptic Equations},
	Springer, New York, (2013).
	
\bibitem{PrecupFPT} R. Precup, A vector version of Krasnosel'ski\u{\i}'s fixed point theorem in cones and positive periodic solutions of nonlinear systems, {\it J. Fixed Point Theory Appl.}, {\bf 2} (2007), 141--151.
	
\bibitem{PrecupSDC} R. Precup, Componentwise compression-expansion conditions for systems of nonlinear operator equations and applications, {\it Mathematical models in engineering, biology and medicine}, 284--293, AIP Conf. Proc., 1124, Amer. Inst. Phys., Melville, NY (2009).

\bibitem{QiZh} W. Qin and P. Zhou, A review on the dynamics of two species competitive ODE and parabolic systems, {\it J. Appl. Anal. Comput.}, {\bf 12} (2022), 2075--2109.

\bibitem{ARC} A. S. Reimer and A. F. Cheviakov,
	A Matlab-based finite-difference solver for the Poisson problem with mixed Dirichlet--Neumann boundary conditions,
	{\it Comput. Phys. Commun.},
\textbf{184} (2013), 783--798.	

\bibitem{JRL} J. Rodr\'iguez-L\'opez, A fixed point index approach to Krasnosel'ski\u{\i}-Precup fixed point theorem in cones and applications, \textit{Nonlinear Anal.}, \textbf{226} (2023), No. 113138, 1--19.

\bibitem{bruf}
B. Ruf, \textit{Superlinear Elliptic Equations and Systems}. In: Handbook of Differential Equations, Vol. 5: Stationary Partial Differential Equations, M. Chipot (ed.), Elsevier, (2008), 211--276.


\bibitem{Yang-Lan}
G. Yang and K. Lan, Solutions and eigenvalues of Laplace's equation on bounded open sets, \textit{Electron. J. Differential Equations}, Vol. 2021 (2021), No. 87, pp. 1-15.

\bibitem{zhch11} Z. Zhang and X. Cheng, Existence of positive solutions for a semilinear elliptic system, \textit{Topol. Methods Nonlinear Anal.},
\textbf{37}  (2011), 103--116. 

\bibitem{zou}
H. Zou, A priori estimates for a semilinear elliptic system without variational structure and their applications, \textit{Math. Ann.},
\textbf{323} (2002) 713--735.

\end{thebibliography}
\end{document}